\renewcommand{\Re}{\mathbb{R}}
\newtheorem{theorem}{Theorem}
\newtheorem{proposition}{Proposition}
\newtheorem{corollary}{Corollary}
\newtheorem{lemma}{Lemma}
\def\Re{\mathbb{R}}
\newcommand{\varep}{\varepsilon}
\def\cU{{\cal U}}
\def\cV{{\cal V}}
\def\cW{{\cal W}}
\def\cZ{{\cal Z}}
\newcommand{\tcU}{{\tilde \cU}}
\newcommand{\tcV}{{\tilde \cV}}
\newcommand{\tf}{{\tilde{f}}}
\newcommand{\tu}{{\tilde{u}}}
\newcommand{\tw}{{\tilde{w}}}
\newcommand{\tx}{{\tilde{x}}}
\newcommand{\ty}{{\tilde{y}}}
\newcommand{\tz}{{\tilde{z}}}
\newcommand{\tD}{{\tilde{D}}}
\newcommand{\tH}{{\tilde{H}}}
\newcommand{\tK}{{\tilde{K}}}
\newcommand{\tU}{{\tilde{U}}}
\newcommand{\tV}{{\tilde{V}}}
\newcommand{\tX}{{\tilde{X}}}
\newcommand{\tY}{{\tilde{Y}}}
\def\varep{\varepsilon}
\title{A Geometric Vietoris-Begle Theorem, with an Application to
  Convex Subsets of Topological Vector Lattices}
\author{Andrew McLennan\thanks{School of Economics, University of
    Queensland, Level 6 Colin Clark Building, St Lucia QLD 4072,
    Australia, \texttt{a.mclennan@economics.uq.edu.au}.  I am
    grateful to Rabee Tourky for guidance concerning Riesz spaces, and
    it is a pleasure to acknowledge a helpful conversation with
    Benjamin Burton. \emph{2010 Mathematics Subject Classification:}
    Primary 55P10; Secondary 46A40, 54C55, 55M15.}}
\begin{document}
\maketitle

\begin{abstract} % \setlength{\baselineskip}{1.5\baselineskip}
  We show that if $L$ is a topological vector lattice, $u \colon L \to
  L$ is the function $u(x) = x \vee 0$, $C \subset L$ is convex, and
  $D = u(C)$ is metrizable, then $D$ is an ANR and $u|_C \colon C \to
  D$ is a homotopy equivalence, so $D$ is contractible and thus an AR.
  This is proved by verifying the hypotheses of a second result: if
  $X$ is a connected space that is homotopy equivalent to an ANR, $Y$
  is an ANR, and $f \colon X \to Y$ is a continuous surjection such
  that, for each $y \in Y$ and each neighborhood $V \subset Y$ of $y$,
  there is a neighborhood $V' \subset V$ of $y$ such that $f^{-1}(V')$
  can be contracted in $f^{-1}(V)$, then $f$ is a homotopy
  equivalence. The latter result is a geometric analogue of the
  Vietoris-Begle theorem.
\end{abstract}

% \noindent {\bf Key Words:} Parity Game; \tbNP; co\tbNP\\
% \noindent {\bf JEL Classification Numbers:} D82, G12.

% \newpage
\section{Introduction} \label{sec:Introduction}

The project this paper reports on began with a seemingly simple
question.  Let $u \colon \Re^n \to \Re^n_+$ be the function $u(x) =
(\max\{x_1,0\}, \ldots, \max\{x_n,0\})$.  If $C \subset \Re^n$ is
convex and $D = u(C)$, is $D$ contractible?  Easily visualized
examples such as a disk pierced by the corner of $\Re^3_+$ suggest an
affirmative answer.  It is not hard to prove that $D$ is
contractible when $C$ is a line or line segment, but the argument does
not easily generalize.

To see why this question might appeal to a mathematical economist, we
review some fixed point theory.  If $X$ and $Y$ are topological
spaces, a \emph{correspondence} $F \colon X \to Y$ is an assignment of
a nonempty $F(x) \subset Y$ to each $x \in X$. Such an $F$ is
\emph{convex} (compact, etc.) \emph{valued} if each $F(x)$ is convex
(compact, etc.), and $F$ is \emph{upper hemicontinuous} if, for each
$x \in X$ and each open $V \subset Y$ containing $F(x)$, there is a
neighborhood $U$ of $x$ such that $F(x') \subset V$ for all $x' \in
U$.  The Kakutani fixed point theorem asserts that if $C \subset
\Re^n$ is nonempty, compact, and convex, and $F \colon C \to C$ is an upper
hemicontinuous, compact, convex valued correspondence, then there is
an $x^* \in C$ such that $x^* \in F(x^*)$.  This extension of the
Brouwer fixed point theorem is used to prove the fundamental
equilibrium existence results for game theory and general economic
equilibrium, and it (and its infinite dimensional extensions) are
frequently applied throughout economic theory.

Insofar as the conclusion of the Kakutani fixed point theorem is
topological, the geometric hypotheses seem perhaps too strong.
Eilenberg and Montgomery \cite{EiMo46} showed how they can be relaxed.
A space $Z$ is \emph{acyclic} with respect to a homology theory $H_*$
with associated reduced homology $\tH_*$ (cohomology theory $H^*$ with
associated reduced cohomology $\tH^*$) if $\tH_n(Z) = 0$ ($\tH^n(Z) =
0$) for all $n = 0, 1, 2, \ldots$.  Recall that a metric space $X$ is
an \emph{absolute retract} (AR) if, whenever $e \colon X \to Z$ is an
embedding of $X$ as a closed subset of a metric space $Z$, there is a
retraction $r \colon Z \to e(X)$.  The Eilenberg-Montgomery fixed
point theorem asserts that if $C$ is a nonempty compact AR and $F \colon C
\to C$ is an upper hemicontinuous correspondence that is compact and
acyclic (for Vietoris homology) valued, then there is an $x^* \in C$
such that $x^* \in F(x^*)$.

Since homology and cohomology are invariant under homotopy, a
contractible space is acyclic for any homology or cohomology theory.
We can imagine composing a convex valued correspondence with $u$ to
obtain a contractible valued correspondence, then applying the
Eilenberg-Montgomery theorem.  To be honest, what I initially thought
might be an application of this sort turned out to be a mirage, and I
know of no actual economic application, but nevertheless the issue
still seems quite interesting.

We can place our problem in a more general setting.  A \emph{vector
  lattice} or \emph{Riesz space} is a vector space $L$ over the reals
endowed with a partial order $\ge$ such that: a) if $x \ge y$, then $x
+ z \ge y + z$ for all $z \in L$ and $\alpha x \ge \alpha y$ for all
$\alpha \ge 0$; b) any two elements $x,y \in L$ have a least upper
bound $x \vee y$ and a greatest lower bound $x \wedge y$.  Fix such an
$L$.  The \emph{lattice cone} of $L$ is $L_+ = \{\, x \in L : x \ge 0
\,\}$.  Let $u \colon L \to L_+$ be the function $u(x) = x \vee 0$.
(For any $a \in L$ all of our results hold equally, with obvious
modifications, for the functions $x \mapsto x \vee a$ and $x \mapsto x
\wedge a$.)

An important basic result (\cite[p.~5]{AlBu03}) is that $L$ is a
\emph{distributive lattice}:
$$x \vee (y \wedge z) = (x \vee y) \wedge (x \vee z) \quad \text{and}
\quad x \wedge (y \vee z) = (x \wedge y) \vee (x \wedge z)$$ for all
$x, y, z \in L$.  Two other basic results will be important.  Let
$|\cdot| \colon L \to L_+$ be the function $|x| = (x \vee 0) - (x
\wedge 0)$.

\begin{lemma} \label{lemma:UpperLower}
  For all $x_0, x_1 \in L$ and $t \in [0,1]$, $$u(x_0) - |u(x_1) -
  u(x_0)| \le u(x_0) \wedge u(x_1) \le u((1 - t)x_0 + tx_1) \le u(x_0)
  \vee u(x_1) \le u(x_0) + |u(x_1) - u(x_0)|.$$
\end{lemma}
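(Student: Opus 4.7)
The chain splits into an outer pair of inequalities (not involving $t$) and an inner pair involving the convex combination. I would handle these two groups separately.

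\textbf{Outer inequalities.} I would use the standard Riesz space identities
$$a \vee b = a + u(b - a), \qquad a \wedge b = a - u(a - b),$$
which follow from translation invariance of the order. Applied with $a = u(x_0)$ and $b = u(x_1)$, and combined with $u(z) \le |z|$ and $u(-z) \le |z|$ (immediate from $|z| = u(z) + u(-z)$, itself a rewrite of the definition of $|\cdot|$ recalled just above the lemma), both outer inequalities follow at once.

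\textbf{Inner inequalities.} I would proceed in three steps. First, $u$ is order-preserving: $y \le z$ implies $y \vee 0 \le z \vee 0$. Second, because $x_0 \wedge x_1$ is a lower bound and $x_0 \vee x_1$ an upper bound for both $x_0$ and $x_1$, any convex combination satisfies $x_0 \wedge x_1 \le (1-t) x_0 + t x_1 \le x_0 \vee x_1$. Applying $u$ preserves this chain:
$$u(x_0 \wedge x_1) \le u((1-t) x_0 + t x_1) \le u(x_0 \vee x_1).$$
Third, I would identify the flanking terms with $u(x_0) \wedge u(x_1)$ and $u(x_0) \vee u(x_1)$. The join identity $u(x_0 \vee x_1) = u(x_0) \vee u(x_1)$ is immediate: $(x_0 \vee 0) \vee (x_1 \vee 0) = x_0 \vee x_1 \vee 0$. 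For the meet identity $u(x_0 \wedge x_1) = u(x_0) \wedge u(x_1)$, distributivity (recalled just above the lemma) gives
$$(x_0 \vee 0) \wedge (x_1 \vee 0) = ((x_0 \vee 0) \wedge x_1) \vee ((x_0 \vee 0) \wedge 0) = ((x_0 \wedge x_1) \vee (0 \wedge x_1)) \vee 0,$$
where $(x_0 \vee 0) \wedge 0 = 0$ because $x_0 \vee 0 \ge 0$. Reassociating and using $0 \wedge x_1 \le 0$, so that $(0 \wedge x_1) \vee 0 = 0$, the right side collapses to $(x_0 \wedge x_1) \vee 0 = u(x_0 \wedge x_1)$.

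\textbf{Main obstacle.} There is none. The argument is a short chain of standard Riesz space manipulations, and the only nontrivial structural ingredient is distributivity, invoked solely for the meet identity $u(x_0 \wedge x_1) = u(x_0) \wedge u(x_1)$.
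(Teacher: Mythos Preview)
Your proof is correct and follows essentially the same route as the paper's: the outer inequalities are obtained from translation invariance of the order together with the decomposition $|z| = u(z) + u(-z)$, and the inner ones from the monotonicity of $u$ applied to $x_0 \wedge x_1 \le (1-t)x_0 + tx_1 \le x_0 \vee x_1$, combined with the distributive identity $(x_0 \vee 0) \wedge (x_1 \vee 0) = (x_0 \wedge x_1) \vee 0$. The only difference is presentational: you spell out the distributivity computation behind that identity, which the paper uses without comment.
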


\begin{proof}
  For the first inequality we have the general computation
  $$y_0 - |y_1 - y_0| = y_0 + (y_1 - y_0) \wedge 0 - (y_1 - y_0) \vee
  0 \le y_0 + (y_1 - y_0) \wedge 0 = y_1 \wedge y_0.$$
 For the second inequality we compute that
  $$u(x_0) \wedge u(x_1) = (x_0 \vee 0) \wedge (x_1 \vee 0) = (x_0
  \wedge x_1) \vee 0 = ((1 - t)(x_0
  \wedge x_1) + t(x_0
  \wedge x_1)) \vee 0$$
  $$\le ((1 - t)x_0 + tx_1) \vee 0 = u((1 - t)x_0 + tx_1).$$
  The third and fourth inequalities follow from symmetric computations.
\end{proof}

\begin{corollary}
  For each $y \in L_+$, $u^{-1}(y)$ is convex.
\end{corollary}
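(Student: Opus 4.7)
The plan is to deduce this immediately from the middle two inequalities of Lemma~\ref{lemma:UpperLower}. Fix $y \in L_+$, let $x_0, x_1 \in u^{-1}(y)$, and let $t \in [0,1]$. Since $u(x_0) = u(x_1) = y$, the lattice operations give $u(x_0) \wedge u(x_1) = y$ and $u(x_0) \vee u(x_1) = y$.

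Applying Lemma~\ref{lemma:UpperLower}, we then have
$$y = u(x_0) \wedge u(x_1) \le u((1-t)x_0 + tx_1) \le u(x_0) \vee u(x_1) = y,$$
so by antisymmetry of the partial order, $u((1-t)x_0 + tx_1) = y$, i.e., $(1-t)x_0 + tx_1 \in u^{-1}(y)$. There is no real obstacle here; the lemma was already structured to deliver precisely the sandwich needed, and the corollary is essentially a restatement of its middle inequalities in the special case $u(x_0) = u(x_1)$.
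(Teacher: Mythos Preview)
Your proof is correct and matches the paper's intended approach: the corollary is stated without proof immediately after Lemma~\ref{lemma:UpperLower}, precisely because the middle two inequalities there force $u((1-t)x_0 + tx_1) = y$ when $u(x_0) = u(x_1) = y$. There is nothing to add or change.
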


A set $A \subset L$ is \emph{solid} if, for all $y \in A$, $A$
contains all $x \in L$ such that $|x| \le |y|$.  If, in addition to
being a Riesz space, $L$ is a (not necessarily Hausdorff) topological
vector space and its topology has a base at the origin consisting of
solid sets, then $L$ is \emph{locally solid}, and a \emph{topological
  vector lattice}. A result of Roberts and Namioka
(\cite[p.~55]{AlBu03}) asserts that $L$ is locally solid if and only
if the function $(x,y) \mapsto x \vee y$ is uniformly
continuous\footnote{That is, for any neighborhood of the origin $V$
  there is a neighborhood of the origin $U$ such that $x' \vee y' \in
  (x \vee y) + V$ for all $x,y,x',y'$ such that $x' \in x + U$ and $y'
  \in y + U$.}, and this is the case if and only if the function
$(x,y) \mapsto x \wedge y$ is uniformly continuous. (It is possible
(\cite[p.~56]{AlBu03}) that $u$ is continuous even when $L$ is not
locally solid.)  From this point forward we assume that $L$ is a
topological vector lattice.

Recall that a metric space $X$ is an \emph{absolute neighborhood
  retract} (ANR) if, whenever $e \colon X \to Z$ is an embedding of
$X$ as a closed subset of another metric space $Z$, there is a
neighborhood $U \subset Z$ of $e(X)$ and a retraction $r \colon U \to
e(X)$.  An ANR is an AR if and only if it is contractible
(\cite[11.2]{GrDu03}).  If $C \subset L$ is convex, then for any $x_0
\in C$ the contraction $c \colon C \times [0,1] \to C$ given by
$c(x,t) = (1 - t)x + tx_0$ is continuous by virtue of the continuity
of the vector operations (even if $L$ is not locally convex) so $C$ is
contractible, and thus an AR if it is an ANR.

Our first main result is:

\begin{theorem} \label{th:MainOne}
  If $C \subset L$ is convex and $D = u(C)$ is metrizable, then $D$ is
  an ANR and $u|_C \colon C \to D$ is a homotopy equivalence, so $D$
  is an AR.
\end{theorem}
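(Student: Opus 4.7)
The plan is to apply the geometric Vietoris-Begle theorem (stated later in the paper) to the continuous surjection $f = u|_C \colon C \to D$, with $X = C$ and $Y = D$. The easy hypotheses are immediate: $C$ is convex, hence contractible to any basepoint $x_0 \in C$ via the linear homotopy (continuous by the TVS structure, regardless of local convexity), so $C$ is connected and homotopy equivalent to an ANR (a point); and $u$ is continuous by the Roberts-Namioka theorem recalled after Lemma~\ref{lemma:UpperLower}, so $u|_C$ is a continuous surjection onto $D$.

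For the local preimage-contractibility condition, Lemma~\ref{lemma:UpperLower} does the work. Fix $y = u(x_0) \in D$ with $x_0 \in C$ and a neighborhood $V$ of $y$ in $D$. By local solidity of $L$, there is a solid neighborhood $W$ of $0 \in L$ with $V' := (y + W) \cap D \subset V$. For any $x \in f^{-1}(V')$ we have $u(x) - y \in W$; the lemma (applied with the roles of $x_0$ and $x_1$ exchanged and parameter $1-t$) gives the sandwich $|u((1-t)x + tx_0) - y| \le |u(x) - y|$, so two invocations of solidity yield $u((1-t)x + tx_0) - y \in W$, that is, $u((1-t)x + tx_0) \in V'$. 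Thus the linear homotopy in $C$ from $x$ to $x_0$ stays in $f^{-1}(V')$ and contracts it to $x_0$ inside $f^{-1}(V') \subset f^{-1}(V)$.

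The main obstacle is showing that $D$ itself is an ANR, a hypothesis of the Vietoris-Begle theorem that must be verified separately. My approach would be to leverage Lemma~\ref{lemma:UpperLower} to promote the local contractibility demonstrated above to the uniform local structure required for ANR, and then invoke a Hanner-type characterization of metrizable ANRs. Alternatively, given any closed embedding $D \hookrightarrow Z$ into a metric space, I would construct a neighborhood retraction $r \colon U \to D$ by a Dugundji-style partition-of-unity argument, replacing the convex averaging step (which fails because $D$ need not be convex) by an averaging built from the lattice operations $\vee$ and $\wedge$ on $L_+$; the sandwich estimate of the lemma guarantees both that the result stays in $D$ and that it depends continuously on the input.

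Once $D$ is known to be an ANR, the geometric Vietoris-Begle theorem yields that $u|_C$ is a homotopy equivalence; $D$ then inherits contractibility from $C$, and a contractible metric ANR is an AR by \cite[11.2]{GrDu03}.
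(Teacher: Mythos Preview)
Your overall strategy matches the paper's: verify that $u|_C$ is compressive, show that $D$ is an ANR, and then invoke Theorem~\ref{th:MainTwo}. Your compressiveness argument is correct and in fact a bit sharper than the paper's Proposition~\ref{prop:SolidPreimage}: by taking $W$ solid and $x_0 \in u^{-1}(y) \cap C$, you contract $f^{-1}(V')$ \emph{within itself}, not merely inside some larger $f^{-1}(V)$.

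The genuine gap is the ANR step, and neither of your two sketches closes it. For route (a), what you have proved is that the \emph{preimages} $f^{-1}(V') \subset C$ are contractible; absent a continuous section $D \to C$ there is no way to push this down, so you have not shown that $D$ itself is locally contractible, and in any case local contractibility of a metric space is not sufficient for it to be an ANR. For route (b), an ``averaging built from $\vee$ and $\wedge$'' is not well defined: $D$ need not be closed under the lattice operations, and Lemma~\ref{lemma:UpperLower} only sandwiches $u$ of a convex combination taken \emph{in the domain $L$}, not any lattice expression formed in the range.

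The idea you are missing, and which the paper supplies, is to do the Dugundji averaging in $C$ rather than in $D$. After embedding $D$ closedly in a normed space and covering the complement of $D$ in the convex hull $\tilde K$ of $D$ by a locally finite family $\{\tilde U\}$ with subordinate partition of unity $\{\varphi_{\tilde U}\}$, one chooses for each $\tilde U$ a point $x_{\tilde U} \in C$ with $u(x_{\tilde U})$ near $\tilde U$ and sets $\rho(z) = u\bigl(\sum_{\tilde U} \varphi_{\tilde U}(z)\,x_{\tilde U}\bigr)$. The convex combination is legitimate because $C$ is convex, the value lands in $D$ because $u$ is applied afterward, and continuity at points of $D$ follows from exactly the solid-neighborhood estimate you proved (the paper packages this as Proposition~\ref{prop:SolidPreimage}). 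This lift--average--project maneuver is the missing step.
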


\smallskip
\noindent \textbf{Remark:} Among various ways that $D$ may be
metrizable even if $L$ is not, we mention that Varadarajan
\cite{Var58} has shown that if $L$ is the space of measures on a
compact metric space with the weak topology, then $L_+$ is metrizable,
but $L$ itself is metrizable only under quite restrictive conditions.
This case is common in economic applications.

\smallskip
The next result is one of the key ideas of the proof of Theorem
\ref{th:MainOne}.

\begin{proposition} \label{prop:SolidPreimage}
  If $y \in L_+$ and $U \subset L$ is a neighborhood of the origin, then
  there is a neighborhood $W \subset U$ of the origin such that the convex
  hull of $u^{-1}(y + W)$ is contained in $u^{-1}(y + U)$.
\end{proposition}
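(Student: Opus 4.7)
My plan is to iterate Lemma \ref{lemma:UpperLower} to control $u$ on convex combinations by an order interval determined by the $w_i = u(x_i) - y$, and then to choose $W$ by exploiting the locally solid structure of $L$ together with the sign constraint that $u$ takes values in $L_+$.

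First I will show by induction on $n$ that whenever $\bar x = \sum_{i=1}^n t_i x_i$ is a convex combination of points $x_1,\dots,x_n$ with $u(x_i) = y + w_i$, one has
$$y + \bigwedge_{i=1}^n w_i \;\le\; u(\bar x) \;\le\; y + \bigvee_{i=1}^n w_i.$$
The case $n=2$ is immediate from Lemma \ref{lemma:UpperLower}, using the identities $u(x_0)\wedge u(x_1) = y + (w_0 \wedge w_1)$ and $u(x_0)\vee u(x_1) = y + (w_0 \vee w_1)$. For the inductive step, set $\bar x' = (1-t_n)^{-1}\sum_{i<n}t_i x_i$ so that $\bar x = (1-t_n)\bar x' + t_n x_n$ is a two-term convex combination; the inductive hypothesis gives $u(\bar x') = y + \tilde w$ with $\bigwedge_{i<n} w_i \le \tilde w \le \bigvee_{i<n} w_i$, and applying Lemma \ref{lemma:UpperLower} once more to the pair $(\bar x', x_n)$ propagates the bound to $n$ points.

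Given this, it suffices to choose $W \subset U$ so that for every finite collection $w_1,\dots,w_n \in W$ the order interval $[\bigwedge_i w_i, \bigvee_i w_i]$ lies in $U$. Let $V$ be a solid neighborhood of the origin with $V \subset U$. Since $u(x_i) \ge 0$ forces $w_i \ge -y$, i.e.\ $w_i^- \le y$, the negative extreme $\bigwedge_i w_i$ is automatically bounded below by $-y$ in the order. The standard lattice identities $|\bigvee_i w_i| \le \bigvee_i |w_i|$ and $|\bigwedge_i w_i| \le \bigvee_i |w_i|$ then reduce the problem to choosing $W$ so that finite joins $\bigvee_i |w_i|$ of absolute values of its elements remain in $V$; once this is arranged, the order interval is contained in $V \subset U$ by solidness.

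The main obstacle is this last step: passing from the two-point bound in Lemma \ref{lemma:UpperLower} to a neighborhood that simultaneously controls arbitrary finite joins. The locally solid structure of $L$, combined with the constraint $w_i^- \le y$ and continuity of the lattice operations, is exactly what is needed to select such a $W$, and I expect this to be the technically delicate part of the argument.
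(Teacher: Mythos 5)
Your inductive sandwich $y + \bigwedge_i w_i \le u(\bar x) \le y + \bigvee_i w_i$ is correct, and so is the reduction, via solidity and $|\bigwedge_i w_i| \vee |\bigvee_i w_i| \le \bigvee_i |w_i|$, to finding a $W$ all of whose finite joins of absolute values stay in a solid $V \subset U$. But the step you defer as ``technically delicate'' is not delicate --- it is impossible. Local solidity and continuity of the lattice operations control only a fixed finite number of arguments at a time, not joins of unboundedly many elements, and the side constraint $w_i^- \le y$ does not help because $y$ is fixed and need not be small. Concretely, in $L = L^1[0,1]$ with $y = \chi_{[0,1]}$ (where $\chi_A$ denotes the indicator function of $A$), put $A_i = [(i-1)/n, i/n]$ and $w_i = -\chi_{A_i}$; then $w_i^- = \chi_{A_i} \le y$ and $\||w_i|\|_1 = 1/n$, yet $\bigvee_i |w_i| = \chi_{[0,1]}$ has norm $1$ for every $n$. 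Moreover this is not merely a failure of your estimate: taking $x_i = 1 - n\chi_{A_i}$ gives $u(x_i) = y + w_i \in y + W$ for any prescribed neighborhood $W$ once $n$ is large enough, while $\tfrac1n\sum_i x_i = 0$ and $u(0) = 0$ lies at $L^1$-distance $1$ from $y$; the lower end $y + \bigwedge_i w_i = 0$ of your sandwich is attained. So the convex hull of $u^{-1}(y+W)$ genuinely escapes $u^{-1}(y+U)$, and your plan cannot be completed as stated.

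For comparison, the paper's own proof applies Lemma \ref{lemma:UpperLower} only to two-point combinations: for $x_0, x_1 \in u^{-1}(y+W)$ it traps $u((1-t)x_0 + tx_1)$ between $u(x_0) - |u(x_0)-u(x_1)|$ and $u(x_0) + |u(x_0)-u(x_1)|$, both of which can be forced into $y + V$ because only two elements of $W$ are involved, and then invokes Lemma \ref{lemma:Interval}. That controls every segment joining two points of $u^{-1}(y+W)$ --- which is all that the straight-line contraction behind Lemma \ref{lemma:Compressive} requires --- but it does not address a general finite convex combination, which is exactly where your induction shows that the join $\bigvee_i |w_i|$ must be bounded. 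In short, you have probed deeper than the printed argument and located a genuine obstruction in the statement as written: in an arbitrary topological vector lattice (even a Banach lattice such as $L^1$) the conclusion about the full convex hull fails, and a correct development must either retreat to the two-point (segment) version or add hypotheses under which finite joins of small elements remain small (as in $C(K)$ or other $M$-spaces).
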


\begin{lemma}  \label{lemma:Interval}
  For any neighborhood $U \subset L$ of the origin there is a
  neighborhood $V \subset U$ of the origin such that $w \in U$
  whenever $u, v \in V$, $w \in L$, and $u \le w \le v$.
\end{lemma}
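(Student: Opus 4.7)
The plan is to exploit the local solidity of $L$ together with continuity of addition. The key observation is that if $u \le w \le v$, then $0 \le w - u \le v - u$, so $|w-u| = w-u \le v-u = |v-u|$, and hence $w - u$ lies in any solid set containing $v - u$. This reduces the interval condition to something controlled by solid neighborhoods of the origin.

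First I would use the hypothesis that $L$ is a topological vector lattice, i.e.\ locally solid, together with continuity of addition at $(0,0)$, to choose a solid neighborhood $V_0$ of the origin with $V_0 + V_0 \subset U$. Concretely, pick any solid neighborhood $U_0 \subset U$ of the origin, use continuity of $+$ to find a neighborhood $U_1$ of the origin with $U_1 + U_1 \subset U_0$, and then choose $V_0$ to be a solid neighborhood of the origin contained in $U_1$.

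Next, using continuity of subtraction at $(0,0)$, pick a neighborhood $V'$ of the origin with $V' - V' \subset V_0$, and then choose a solid neighborhood $V$ of the origin with $V \subset V' \cap V_0$. Then $V \subset V_0 \subset U$, so $V \subset U$ as required.

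It remains to verify the defining property. Given $u, v \in V$ and $w \in L$ with $u \le w \le v$, note that $v - u \in V - V \subset V_0$, and by the observation above, $|w-u| \le |v-u|$. Since $V_0$ is solid, $w - u \in V_0$. Since also $u \in V \subset V_0$, we conclude $w = u + (w-u) \in V_0 + V_0 \subset U$. There is no real obstacle here; the only subtlety is being sure to combine local solidity with the continuity of the vector operations in the right order, since we need both additive control (to write $w = u + (w-u)$) and solid control (to pass from the order interval to a neighborhood).
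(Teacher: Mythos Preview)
Your proof is correct. Both your argument and the paper's rely on local solidity plus continuity of the vector operations, but the decompositions differ. The paper assumes without loss of generality that $U$ itself is solid, chooses $V$ so that $-(u \wedge 0) + (v \vee 0) \in U$ for all $u,v \in V$, and then observes that $u \le w \le v$ forces $|w| = -(w\wedge 0) + (w\vee 0) \le -(u\wedge 0) + (v\vee 0)$, so $w \in U$ by solidity of $U$. You instead translate by $u$: from $0 \le w-u \le v-u$ you get $|w-u| \le |v-u|$, trap $w-u$ in a solid $V_0$, and recover $w = u + (w-u) \in V_0 + V_0 \subset U$. The paper's route is marginally more direct since it avoids the intermediate $V_0 + V_0$ step and controls $|w|$ in one shot; your route has the virtue that the order estimate $|w-u| \le |v-u|$ is completely transparent (no lattice identities beyond the definition of $|\cdot|$ on positive elements). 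Either way the content is the same.
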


\begin{proof}
  Without loss of generality we may assume that $U$ is solid.  Since
  $\vee$, $\wedge$, and the vector operations are continuous, there is
  a neighborhood $V$ of the origin such that $-(u \wedge 0) + (v \vee
  0) \in U$ for all $u, v \in V$.  If $u, v \in V$, $w \in L$, and $u
  \le w \le v$, then $u \wedge 0 \le w \wedge 0$ and $w \vee 0 \le v
  \vee 0$, so $|w| \le -(u \wedge 0) + (v \vee 0)$ and thus $w \in U$.
\end{proof}

\begin{proof}[Proof of Proposition \ref{prop:SolidPreimage}]
  Lemma \ref{lemma:Interval} gives a neighborhood $V \subset U$ of the
  origin such that $w \in U$ whenever $u, v \in V$ and $u \le w \le
  v$.  Since $\vee$, $\wedge$, and the vector operations are
  continuous, there is a neighborhood $W$ of the origin such that $y_0
  - |y_0 - y_1| - y, y_0 + |y_0 - y_1| - y \in V$ for all $y_0, y_1
  \in y + W$.  For any $x_0, x_1 \in u^{-1}(y + W)$ and any $t \in
      [0,1]$ we have $u(x_0) - |u(x_0) - u(x_1)|, u(x_0) + |u(x_0) -
      u(x_1)| \in y + V$ and therefore $u((1 - t)x_0 + tx_1) \in y +
      U$ by virtue of Lemma \ref{lemma:UpperLower}.
\end{proof}

\section{A Sufficient Condition for Homotopy Equivalence}

Following Milnor \cite{Mil59} let $\cW_0$ be the class of spaces which
have the homotopy type of a countable CW-complex.  As Milnor explains,
results of Whitehead \cite{Whi49} and Hanner \cite{Han51} imply that
$\cW_0$ is also the class of all spaces that have the homotopy type of
a separable ANR, the class of all spaces that have the homotopy type
of a countable locally finite simplicial complex, and the class of all
spaces that are dominated\footnote{A space $P$ \emph{dominates} a
  space $X$ if there are maps $f \colon X \to P$ and $g \colon P \to
  X$ such that $gf \simeq 1$.} by a countable CW-complex. 

Let $X$ and $Y$ be topological spaces, and let $f \colon X \to Y$ be a
map, A \emph{compressive pair} for $f$ is a pair $(V',V)$ where $V$
and $V'$ are open subsets of $Y$ with $V' \subset V$ such that
$f^{-1}(V')$ is contractible in $f^{-1}(V)$: there is a continuous
$\xi \colon f^{-1}(V') \times [0,1] \to f^{-1}(V)$ such that
$\xi(\cdot,0)$ is the identity function of $f^{-1}(V')$ and
$\xi(\cdot,1)$ is a constant function.  We say that $f$ is
\emph{compressive} if it is surjective and, for every $y \in Y$ and
every neighborhood $V \subset Y$ of $y$, there is a neighborhood $V'$
of $y$ such that $(V',V)$ is a compressive pair.  Evidently
Proposition \ref{prop:SolidPreimage} implies that:

\begin{lemma} \label{lemma:Compressive}
  $u|_C \colon C \to D$ is compressive.
\end{lemma}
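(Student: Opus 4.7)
The plan is to read the compressive condition straight off Proposition \ref{prop:SolidPreimage}. Surjectivity of $u|_C$ is immediate from $D = u(C)$, so the only real content is producing compressive pairs. Fix $y \in D$ and, without loss of generality, an open neighborhood $V$ of $y$ in $D$. Using the subspace topology on $D \subset L_+ \subset L$, I would write $V = D \cap (y + U)$ for some open neighborhood $U$ of the origin in $L$. Then I would apply Proposition \ref{prop:SolidPreimage} to this $y$ and $U$ to obtain an open neighborhood $W \subset U$ of the origin such that the convex hull of $u^{-1}(y+W)$ lies in $u^{-1}(y+U)$, and I would set $V' := D \cap (y+W)$, which is an open neighborhood of $y$ in $D$ contained in $V$.

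Because $u$ carries $C$ into $D$, one has $(u|_C)^{-1}(V') = C \cap u^{-1}(y+W)$; and since $y \in D = u(C)$, I can pick some $x_0 \in C$ with $u(x_0) = y$, so in particular $x_0 \in (u|_C)^{-1}(V')$. For the required contraction of $(u|_C)^{-1}(V')$ in $(u|_C)^{-1}(V)$ I would use the straight-line homotopy toward $x_0$: define $\xi \colon (u|_C)^{-1}(V') \times [0,1] \to C$ by $\xi(x,t) = (1-t)x + t x_0$. Continuity is immediate from continuity of the vector operations in $L$, $\xi(\cdot,0)$ is the identity on $(u|_C)^{-1}(V')$, and $\xi(\cdot,1)$ is the constant map at $x_0$. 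Convexity of $C$ gives $\xi(x,t) \in C$, while the convex-hull conclusion of Proposition \ref{prop:SolidPreimage} gives $u(\xi(x,t)) \in y+U$; together these force $u(\xi(x,t)) \in D \cap (y+U) = V$, so $\xi(x,t) \in (u|_C)^{-1}(V)$. Thus $(V',V)$ is a compressive pair.

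There is no serious obstacle, since the essential lattice-theoretic work has already been absorbed by Lemma \ref{lemma:UpperLower} and Proposition \ref{prop:SolidPreimage}. The only bookkeeping, and the one place where one must be a bit careful, is the translation between open neighborhoods of $y$ in the subspace topology of $D$ and translates of open neighborhoods of the origin in $L$, together with the observation that the straight-line homotopy in $C$ lands in exactly the convex hull to which the proposition applies.
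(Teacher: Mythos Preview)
Your argument is correct and is precisely the verification the paper leaves implicit: the paper simply asserts that Proposition~\ref{prop:SolidPreimage} ``evidently'' implies the lemma, and your straight-line contraction toward a chosen $x_0 \in (u|_C)^{-1}(y)$, together with the convex-hull containment from that proposition, is exactly the intended unpacking.
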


Theorem \ref{th:MainOne} will follow from this and Theorem
\ref{th:MainTwo}, which is our second main result.  Prior results
similar to Theorem \ref{th:MainTwo} include various results in Section
3 of \cite{Arm69} and Proposition 2.1.8 of \cite{WJR13}.  In all of
those results $X$ and $Y$ are assumed to be compact and finite
dimensional, and the assumption imposed on the fibers of $f$ is (in
effect) that they are compact AR's.

\begin{theorem} \label{th:MainTwo}
  If $X$ is a connected element of $\cW_0$, $Y$ is an ANR, and $f
  \colon X \to Y$ is a compressive surjection, then $f$ is a homotopy
  equivalence.
\end{theorem}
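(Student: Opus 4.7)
The plan is to show that $f$ is a weak homotopy equivalence and then invoke Whitehead's theorem. Since $X \in \cW_0$ is separable and $f$ is surjective, $Y$ is separable; an ANR is locally path connected, so the connected image $Y = f(X)$ is path connected; and a separable ANR lies in $\cW_0$ by the characterization recalled at the start of the section. Whitehead's theorem for spaces in $\cW_0$ then reduces the problem to showing that $f_*\colon \pi_n(X, x_0) \to \pi_n(Y, f(x_0))$ is a bijection for every $n \ge 0$.

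The key tool is that iterating the compressive hypothesis yields, for each $y \in Y$, each neighborhood $V$ of $y$, and each integer $N \ge 1$, a nested sequence $V = V^0(y) \supset V^1(y) \supset \cdots \supset V^N(y)$ of neighborhoods of $y$ in which each $(V^{k+1}(y), V^k(y))$ is a compressive pair, together with a chosen contraction $\xi_k$ of $f^{-1}(V^{k+1}(y))$ to a point inside $f^{-1}(V^k(y))$. Since $f^{-1}(V^{k+1}(y))$ is contractible inside $f^{-1}(V^k(y))$, any map $S^{j-1} \to f^{-1}(V^{k+1}(y))$ extends to a map $D^j \to f^{-1}(V^k(y))$, which is the form in which the hypothesis will actually be used.

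For surjectivity of $f_*$ on $\pi_n$, given $\alpha\colon (S^n, *) \to (Y, y_0)$, use the ANR structure of $Y$ to pick a neighborhood $U$ of $\alpha(S^n)$ such that any two sufficiently close maps $S^n \to U$ are homotopic in $Y$. At each $y \in \alpha(S^n)$ fix a tower $V^0(y) \supset \cdots \supset V^{n+1}(y)$ with $V^0(y) \subset U$, extract a finite subcover $V^{n+1}(y_1), \ldots, V^{n+1}(y_m)$ of $\alpha(S^n)$, and, by a star-refinement/Lebesgue number argument, triangulate $S^n$ so finely that the closed star of every vertex $v$ has $\alpha$-image in some single $V^{n+1}(y_{i(v)})$; then assign each simplex $\sigma$ the index of its lowest-ordered vertex, so that every face $\tau$ of $\sigma$ satisfies $i(\tau) = i(\sigma)$. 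Build $\beta\colon S^n \to X$ by induction on skeleta: having $\beta$ on the $(k-1)$-skeleton with $\beta(\overline\tau) \subset f^{-1}(V^{n+2-k}(y_{i(\sigma)}))$ for each face $\tau$ of every ambient $k$-simplex $\sigma$, extend $\beta$ across $\sigma$ via the chosen contraction realizing the compressive pair $(V^{n+2-k}(y_{i(\sigma)}), V^{n+1-k}(y_{i(\sigma)}))$. The resulting $\beta$ has $f\beta(\sigma) \subset V^0(y_{i(\sigma)}) \subset U$ for every simplex, so $f\beta$ is close to $\alpha$ in $U$ and hence homotopic to $\alpha$ in $Y$, giving $[\alpha] = f_*[\beta]$.

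Injectivity of $f_*$ on $\pi_n$ is obtained by the same scheme applied to a homotopy $H\colon S^n \times [0,1] \to Y$ between $f\beta_0$ and $f\beta_1$, using towers of depth $n+2$ to lift $H$ to $\tilde H\colon S^n \times [0,1] \to X$ extending the prescribed values on $S^n \times \{0,1\}$. The main obstacle is the choreography of the skeletal construction: one must simultaneously triangulate the source, assign each simplex a reference index, and match the tower depths so that the inductive preimage condition is preserved and the compressive contractions are actually applicable to the boundary data at each step. This bookkeeping, analogous to the combinatorics in \cite{Arm69}, is where essentially all of the real work lies; the other ingredients (Milnor, Whitehead, and the local use of ANR structure of $Y$) are standard.
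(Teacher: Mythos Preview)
Your overall strategy coincides with the paper's: reduce to Whitehead's theorem (Proposition~\ref{prop:WhiteheadOne}) and manufacture approximate lifts by a skeleton-by-skeleton construction that repeatedly exploits the compressive hypothesis. The paper packages this as a single relative lifting statement (Proposition~\ref{prop:Lift}), which yields surjectivity of $f_*$ on $\pi_n$ by taking $K=S^n$, $J=\emptyset$, and injectivity by taking $K=D^{n+1}$, $J=S^n$; the ANR structure of $Y$ is then used, exactly as you indicate, to straighten $f\gamma$ back to the given $g$.

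There is, however, a genuine gap in your ``choreography.'' The assertion that assigning to each simplex $\sigma$ the index $i(\sigma)$ of its lowest vertex yields $i(\tau)=i(\sigma)$ for every face $\tau$ of $\sigma$ is false: the facet of $\sigma$ opposite its lowest vertex has a strictly larger lowest vertex, so in general $i(\tau)\neq i(\sigma)$. More fundamentally, your towers $V^0(y_i)\supset\cdots\supset V^{n+1}(y_i)$ are chosen pointwise and carry no compatibility between distinct base points $y_i$. Consequently, even with a cleverer assignment rule there is no reason the image of $\partial\sigma$---assembled from facets carrying possibly different indices---should land in a single $f^{-1}(V^{n+2-k}(y_{i(\sigma)}))$, which is exactly what your inductive hypothesis demands before the contraction can be applied.

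This missing coherence is precisely what the paper supplies, and it is the one nontrivial idea beyond the outline you give. Rather than towers at individual points, the paper chooses a sequence $\cV_n,\cV_{n-1},\ldots,\cV_0$ of \emph{compressive covers} of $Y$ (locally finite covers by compressive pairs) in which each $\cV_{k-1}$ is a star refinement of $\cV_k$. After barycentric subdivision so that each $k$-simplex has $g$-image inside some member of $\cV_k$, the star-refinement condition guarantees, for every $k$-simplex $\sigma$, a single $(V',V)\in\cV_k$ with $\gamma_{k-1}(\partial\sigma)\subset f^{-1}(V')$, whereupon the contraction of $f^{-1}(V')$ in $f^{-1}(V)$ extends $\gamma_{k-1}$ across $\sigma$. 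Once your pointwise towers are replaced by this star-refining chain of covers, your sketch becomes correct and essentially identical to the paper's argument.
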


Fix $X$, $Y$, and $f \colon X \to Y$ satisfying the hypotheses of
Theorem \ref{th:MainTwo}.  The remainder of this section proves
Theorem \ref{th:MainTwo} by verifying the hypotheses of the following
result, which is Theorem 1 of \cite{Whi49} with slightly less refined
hypotheses.  It has come to be known as \emph{Whitehead's
  theorem}\footnote{Hatcher \cite[Ch.~4]{Hat02} presents an accessible
  proof of Proposition \ref{prop:WhiteheadOne} when $X$ and $Y$ are
  CW-complexes, and \cite[Prop. A.11]{Hat02} also shows that a space
  that is dominated by a CW-complex is homotopy equivalent to a
  CW-complex.}.

\begin{proposition} \label{prop:WhiteheadOne}
  If $X$ and $Y$ are connected elements of $\cW_0$, then a surjective
  map $f \colon X \to Y$ is a homotopy equivalence if and only if $f_*
  \colon \pi_n(X) \to \pi_n(Y)$ is an isomorphism for all $n = 1, 2,
  \ldots$.
\end{proposition}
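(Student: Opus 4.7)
The forward direction is immediate: if $f$ is a homotopy equivalence with homotopy inverse $g$, then for every $n \ge 1$ one has $f_* g_* = (fg)_* = \mathrm{id}_{\pi_n(Y)}$ and $g_* f_* = \mathrm{id}_{\pi_n(X)}$ by functoriality, so each $f_*$ is an isomorphism.

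For the converse, the plan is to reduce to the CW-complex version of Whitehead's theorem (as presented in Hatcher \cite[Ch.~4]{Hat02}) by moving the problem through the homotopy equivalences that come with membership in $\cW_0$. By definition of $\cW_0$, there are countable CW-complexes $X'$ and $Y'$ together with homotopy equivalences $\phi \colon X' \to X$ and $\psi \colon Y \to Y'$; let $\phi^{-1}$ and $\psi^{-1}$ denote chosen homotopy inverses. Since homotopy equivalences preserve connectedness, $X'$ and $Y'$ are connected. Set
\[
  g \;:=\; \psi \circ f \circ \phi \colon X' \longrightarrow Y'.
\]
Because $\phi$ and $\psi$ are homotopy equivalences, $\phi_*\colon \pi_n(X')\to\pi_n(X)$ and $\psi_*\colon\pi_n(Y)\to\pi_n(Y')$ are isomorphisms for every $n$; combining this with the hypothesis on $f_*$ shows that $g_* = \psi_* \circ f_* \circ \phi_*$ is an isomorphism on $\pi_n$ for all $n \ge 1$. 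The $n=0$ case is trivial since both $X'$ and $Y'$ are path connected CW-complexes.

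The classical Whitehead theorem for maps between CW-complexes now applies to $g$ and concludes that $g$ is a homotopy equivalence. Since
\[
  \psi^{-1} \circ g \circ \phi^{-1} \;=\; \psi^{-1} \circ \psi \circ f \circ \phi \circ \phi^{-1} \;\simeq\; f,
\]
and the left-hand side is a composition of three homotopy equivalences ($\psi^{-1}$, $g$, and $\phi^{-1}$), it follows that $f$ is itself a homotopy equivalence.

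The entire argument is a formal reduction: all the genuine topological work is packaged inside the CW-complex form of Whitehead's theorem, which is the cited input. The only thing one has to be careful about is that the characterization of $\cW_0$ really does supply, in both directions, honest continuous maps with homotopy inverses, so that $\phi^{-1}$ and $\psi^{-1}$ may be used as bona fide maps (up to homotopy) in the final composition; this is precisely what is asserted by Milnor's equivalent descriptions of $\cW_0$ in terms of spaces having the homotopy type of a countable CW-complex.
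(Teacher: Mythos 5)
Your proof is correct, and it follows essentially the same route as the paper: the paper offers no proof of its own but cites Theorem 1 of Whitehead's paper, noting in a footnote that the CW-complex case in Hatcher together with the characterization of $\cW_0$ suffices, which is exactly the reduction you carry out explicitly. The only detail worth a half-sentence more care is the claim that homotopy equivalence preserves connectedness (true, since a homotopy from the identity connects every point of the target by a path to the connected image of the domain), after which the argument is complete.
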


A \emph{compressive cover} for $f$ is a collection $\cV$ of
compressive pairs such that $\{\, V' : (V',V) \in \cV \,\}$ is a
locally finite cover of $Y$.  For any open cover $\cU$ of $Y$ there is
a compressive cover $\cV$ such that for all $(V',V) \in \cV$, $V \in
\cU$: since $f$ is compressive, there is a collection $\cZ$ of
compressive pairs $(V',V)$ such that $V \in \cU$ and $\{\, V' : (V',V)
\in \cZ \,\}$ is a cover of $Y$, and since $Y$ is metric, hence
paracompact, $\{\, V' : (V',V) \in \cZ \,\}$ has a locally finite
refinement.  We say that a compressive cover $\tcV$ is a \emph{star
  refinement} of $\cV$ if, for each $(\tV',\tV) \in \tcV$, there is a
$(V',V) \in \cV$ such that
$$\bigcup_{(\tV'_0,\tV_0) \in \tcV, \tV_0 \cap \tV \ne \emptyset}
\tV_0 \subset V'.$$

\begin{lemma}
  For any compressive cover $\cV$ there is an compressive cover $\tcV$
  that star refines it.
\end{lemma}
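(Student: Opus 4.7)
The plan is to first produce a classical open star refinement of $\{V' : (V',V) \in \cV\}$ by paracompactness of $Y$, then use compressiveness of $f$ inside the refined neighborhoods to manufacture compressive pairs, and finally pass to a locally finite open refinement to index $\tcV$.

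For the first step, since $Y$ is a metric ANR it is paracompact, and therefore every open cover admits a locally finite open star refinement. Applying this to $\{V' : (V',V) \in \cV\}$ produces a locally finite open cover $\{W_i\}_{i \in I}$ of $Y$ together with, for each $i$, a pair $(V'_{\alpha(i)}, V_{\alpha(i)}) \in \cV$ such that $\bigcup\{W_j : W_j \cap W_i \ne \emptyset\} \subset V'_{\alpha(i)}$. For the second step, for each $y \in Y$ the set $I(y) = \{i \in I : y \in W_i\}$ is finite and nonempty by local finiteness, so $N_y := \bigcap_{i \in I(y)} W_i$ is an open neighborhood of $y$ that sits inside $W_{i(y)}$ for any chosen $i(y) \in I(y)$. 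Compressiveness of $f$ then supplies an open $M_y$ with $y \in M_y \subset N_y$ such that $(M_y, N_y)$ is a compressive pair.

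For the third step, $\{M_y\}_{y \in Y}$ is an open cover of $Y$, so paracompactness yields a locally finite open refinement $\{\hat{U}_\gamma\}_{\gamma \in \Gamma}$ with $\hat{U}_\gamma \subset M_{y(\gamma)}$ for some $y(\gamma) \in Y$. I would then set $\tcV := \{(\hat{U}_\gamma, N_{y(\gamma)}) : \gamma \in \Gamma\}$. Each such pair is compressive because the contraction witnessing $(M_{y(\gamma)}, N_{y(\gamma)})$ restricts to $f^{-1}(\hat{U}_\gamma) \subset f^{-1}(M_{y(\gamma)})$, and $\{\hat{U}_\gamma\}$ is a locally finite open cover by construction. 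For the star condition, if $N_{y(\gamma')} \cap N_{y(\gamma)}$ is nonempty then $W_{i(y(\gamma'))} \cap W_{i(y(\gamma))}$ is nonempty, so the star property of $\{W_i\}$ yields $W_{i(y(\gamma'))} \subset V'_{\alpha(i(y(\gamma)))}$ and therefore $N_{y(\gamma')} \subset V'_{\alpha(i(y(\gamma)))}$; taking the union over all such $\gamma'$ gives the required containment in $V'_{\alpha(i(y(\gamma)))}$.

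The main conceptual obstacle is that the star condition must be arranged for the large sets $N_{y(\gamma)}$, while local finiteness has to be achieved at the level of the small sets $\hat{U}_\gamma$, and compressive pairs must link the two levels. Decoupling these three requirements—doing the classical open star refinement of the $V'$'s first, then using compressiveness to insert $M_y$ inside $N_y$, and only at the end extracting a locally finite refinement at the $\hat{U}_\gamma$ level—avoids having to fight them simultaneously, and seems to be the cleanest tactical move.
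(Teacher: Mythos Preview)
Your argument is correct and follows essentially the same route as the paper: first take an open star refinement of $\{V':(V',V)\in\cV\}$ using paracompactness of $Y$, and then invoke compressiveness of $f$ together with a further locally finite refinement to produce the compressive cover $\tcV$. The paper streamlines this by quoting the observation made just before the lemma (for any open cover $\cU$ there is a compressive cover with each large set an element of $\cU$), so your detour through the finite intersections $N_y=\bigcap_{i\in I(y)}W_i$ is unnecessary---a single $W_i$ containing $y$ would serve just as well---but this is only a cosmetic difference.
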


\begin{proof}
  Since $Y$ is metric, \cite[VIII.3]{Du66} gives a refinement $\cU$ of
  $\{\, V' : (V',V) \in \cV \,\}$ such that for each $U \in \cU$,
  $\bigcup_{U' \in \cU, U \cap U' \ne \emptyset} U' \subset V'$ for
  some $(V',V) \in \cV$.  As we pointed out above, there is a
  compressive cover $\tcV$ such that for all $(\tV',\tV) \in \tcV$,
  $\tV \in \cU$.
\end{proof}

The Arens-Eells embedding theorem (e.g., \cite[p.~597]{GrDu03})
implies that $Y$ can be isometrically embedded as a closed subset of a
normed linear space $N_Y$.  Fix a retraction $s_Y \colon V_Y \to Y$ of
a neighborhood $V_Y \subset N_Y$ of $Y$.  Let $W_Y \subset Y \times Y$
be a neighborhood of the diagonal $\{\, (y,y) : y \in Y \,\}$ such
that $(1 - t)y_0 + ty_1 \in V_Y$ for all $(y_0,y_1) \in W_Y$ and $t
\in [0,1]$.

For each positive integer $k$ let $D^k$ and $S^{k-1}$ be the closed
unit ball in $\Re^k$ and its boundary.

\begin{proposition} \label{prop:Lift}
  Let $K$ be a finite simplicial complex, let $J$ be a subcomplex, and
  let $g \colon K \to Y$ and $\eta \colon J \to X$ be maps such that $f\eta =
  g|_J$.  Then there is a continuous extension $\gamma \colon K \to X$ of
  $\eta$ such that $(g(p),f(\gamma(p))) \in W_Y$ for all $p \in K$.
\end{proposition}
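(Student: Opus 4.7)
The plan is to build $\gamma$ by skeletal induction on a sufficiently fine subdivision of $K$, using a finite descending tower of compressive covers of $Y$ to control the drift of $f\circ\gamma$ from $g$ one dimension at a time.

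Set $N=\dim K$. Because $W_Y$ is a neighborhood of the diagonal in the metric space $Y$, every $y\in Y$ has an open neighborhood $M_y$ with $M_y\times M_y\subset W_Y$. Combining this with the compressiveness of $f$ and the paracompactness of $Y$, in the manner already used to extract compressive covers inside prescribed open covers, I would obtain a locally finite compressive cover $\cV^0$ of $Y$ whose large members $V$ all satisfy $V\times V\subset W_Y$. Iterating the preceding star-refinement lemma then yields compressive covers $\cV^0,\cV^1,\ldots,\cV^N$ with $\cV^{k+1}$ a star refinement of $\cV^k$ for each $k<N$. A Lebesgue-number argument applied to the compact complex $K$ and the open cover $\{\,A':(A',B')\in\cV^N\,\}$ of $Y$ allows me to replace $K$ and $J$ by a sufficiently fine iterated barycentric subdivision (still denoted $K,J$) so that for every simplex $\sigma$ of $K$ there is a pair $(A_\sigma,B_\sigma)\in\cV^N$ with $g(\sigma)\subset A_\sigma$.

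I then extend $\eta$ to $\gamma\colon K\to X$ by induction on the $k$-skeleton, preserving the invariant $(\ast_k)$: for every $k$-simplex $\sigma$ of $K$ there is a pair $(A,B)\in\cV^{N-k}$ with $g(\sigma)\subset A$ and $f(\gamma(\sigma))\subset B$. At $k=0$, choose $\gamma(v)\in f^{-1}(g(v))$ at every vertex $v\notin J^{(0)}$, which is possible because $f$ is surjective; for $v\in J^{(0)}$ the hypothesis $f\eta=g|_J$ gives the same conclusion, and $(\ast_0)$ holds with the pair from the subdivision step. For the inductive step $k\to k+1$, consider a $(k+1)$-simplex $\sigma\not\subset J$: each $k$-face $\tau\subset\partial\sigma$ provides a pair $(A_\tau,B_\tau)\in\cV^{N-k}$ by $(\ast_k)$, while iterated star-refinement applied to the pair for $\sigma$ in $\cV^N$ produces a pair $(A_\sigma,B_\sigma)\in\cV^{N-k}$ with $g(\sigma)\subset A_\sigma$. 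Because $g(\tau)\subset A_\tau\cap A_\sigma$ for each such $\tau$, every $B_\tau$ meets $B_\sigma$, so the star-refinement of $\cV^{N-k}$ over $\cV^{N-k-1}$ furnishes a single pair $(A,B)\in\cV^{N-k-1}$ whose $A$-component contains $B_\sigma$ together with every $B_\tau$; hence $A\supset g(\sigma)\cup f(\gamma(\partial\sigma))$. Compressiveness of $(A,B)$ null-homotopes $\gamma|_{\partial\sigma}\colon\partial\sigma\cong S^k\to f^{-1}(A)$ inside $f^{-1}(B)$, and this null-homotopy, reinterpreted as a map $D^{k+1}\to f^{-1}(B)$, extends $\gamma$ over $\sigma$ with $f(\gamma(\sigma))\subset B$, restoring $(\ast_{k+1})$. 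Simplices of $J$ require no extension and automatically satisfy the invariant since $f\gamma=g$ there.

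At $k=N$, $(\ast_N)$ puts $g(\sigma)$ and $f(\gamma(\sigma))$ into a common $B$ with $(A,B)\in\cV^0$, so $(g(p),f(\gamma(p)))\in B\times B\subset W_Y$ for every $p\in\sigma$, which is exactly the required conclusion. The main obstacle will be the star-refinement bookkeeping in the inductive step: the tower of covers must be long enough to absorb one slack step per dimension, and one must arrange that the mismatched images $g(\sigma)$ and $f(\gamma(\partial\sigma))$ land in a single compressive pair of the next coarser cover before the contraction can be invoked. This is the reason the tower has length $N=\dim K$ and the reason the statement is confined to finite simplicial complexes.
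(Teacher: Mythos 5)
Your proposal is correct and follows essentially the same route as the paper's proof: a tower of star-refining compressive covers of length $\dim K$, a Lebesgue-number subdivision, and skeletal induction in which each cell is coned off using the contraction of $f^{-1}(V')$ inside $f^{-1}(V)$. The only differences are cosmetic (your covers are indexed in the opposite order, and your invariant $(\ast_k)$ explicitly tracks both $g(\sigma)$ and $f(\gamma(\sigma))$, which the paper leaves partly implicit).
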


\begin{proof}
  (The following construction also occurs in \cite[p.~436]{ArPr69}.)
  Let $n$ be the dimension of $K$.  Let $\cV_n$ be a compressive cover
  such that for all $(V',V) \in \cV_n$, $V \times V \subset W_Y$.
  Choose compressive covers $\cV_{n-1}, \ldots, \cV_0$ such that each
  $\cV_k$ is a star refinement of $\cV_{k+1}$.  In the usual way we
  let the vertices of $K$ be the standard unit basis vectors of a
  Euclidean space, and we let each simplex be the convex hull of its
  vertices.  The Lebesgue number lemma implies that for each $k \ge 1$
  there is $\varep_k > 0$ such that for any point in $K$ the ball of
  radius $\varep_k$ around that point is contained in $g^{-1}(V)$ for
  some $(V',V) \in \cV_k$.  After sufficient repeated barycentric
  subdivision (e.g., \cite{Do80}) the mesh of $K$ is less than
  $\min\{\varep_1, \ldots, \varep_n\}$, so we may assume that for each
  $k$-simplex $\sigma$ of $K$ there is some $(V',V) \in \cV_k$ such
  that $g(\sigma) \subset V$.

  Let $K^{(k)}$ be the $k$-skeleton of $K$.  Proceeding inductively,
  for $k = 0, \ldots, n$ we will construct extensions $\gamma_k \colon
  J \cup K^{(k)} \to X$ of $\eta$ such that for each $k$-simplex
  $\sigma$ of $K$ there is some $(V',V) \in \cV_k$ such that
  $\gamma(\sigma) \subset f^{-1}(V)$.  First construct an extension
  $\gamma_0 \colon J \cup K^{(0)} \to X$ by letting the image
  $\gamma_0(v)$ of a vertex $v$ of $K$ that is not in $J$ be any
  element of $f^{-1}(g(v))$.  (Of course $\gamma_0(v) \in f^{-1}(V)$
  for any $(V',V) \in \cV_0$ such that $g(v) \in V$.)

  Now suppose that $\gamma_{k-1}$ has already been constructed,
  $\sigma$ is a $k$-simplex of $K$ that is not in $J$, and $\tau$ is a
  facet of $\sigma$.  Let $(\tV',\tV)$ be an element of $\cV_{k-1}$
  such that $g(\tau) \subset \tV$.  Since every facet of $\sigma$
  intersects $\tau$ and $\cV_{k-1}$ is a star refinement of $\cV_k$,
  there is a $(V',V) \in \cV_k$ such that $g(\partial \sigma) \subset
  V'$.  Let $\xi \colon f^{-1}(V') \times [0,1] \to f^{-1}(V)$ be
  continuous with $\xi(\cdot,0)$ the identity function of $f^{-1}(V')$
  and $\xi(\cdot,1)$ a constant function. For any homeomorphism $h \colon
  D^k \to \sigma$ such that $h(S^{k-1}) = \partial \sigma$,
  $\gamma_{k-1}|_{\partial \sigma}$ has an extension $\gamma_k|_\sigma
  \colon \sigma \to f^{-1}(V)$ (which is obviously continuous) given
  by setting $$\gamma_k|_\sigma(h((1 - t)p)) =
  \xi(\gamma_{k-1}(h(p),t))$$ for all $p \in S^{k-1}$ and $t \in
     [0,1)$, and letting $\gamma_k|_\sigma(h(0))$ be the constant
       value of $\xi(\cdot,1)$.  Combining such extensions for all
       $k$-simplices in $K$ that are not in $J$ constructs $\gamma_k$.
       Finally let $\gamma = \gamma_n$.
\end{proof}

\begin{proof}[Proof of Theorem \ref{th:MainTwo}]
  For any $n \ge 1$ and continuous $g \colon S^n \to Y$ the last result
  gives a continuous $\gamma \colon S^n \to X$ such that
  $(g(p),f(\gamma(p))) \in W_Y$ for all $p \in S^n$, so that $h(p,t) =
  s_Y((1 - t)g(t) + tf(\gamma(p)))$ is a homotopy between $g$ and
  $f\gamma$.  Thus $f_* \colon \pi_n(X) \to \pi_n(Y)$ is surjective.  If
  $\eta \colon S^n \to X$ is continuous and $g \colon D^{n+1} \to Y$ is an
  extension of $f\eta$, then the last result gives a continuous
  extension $\gamma \colon D^{n+1} \to X$ of $\eta$.  Thus $f_* \colon \pi_n(X)
  \to \pi_n(Y)$ is also injective, so Proposition
  \ref{prop:WhiteheadOne} implies that $f$ is a homotopy equivalence.
\end{proof}

\section{The Proof of Theorem \ref{th:MainOne}} \label{sec:ProofThree}

A convex subset of a topological vector space is contractible (even if
the TVS is not locally convex, because the vector operations are
continuous) so $C$ is an element of $\cW_0$.  In view of Lemma
\ref{lemma:Compressive}, once we have shown that $D$ is an ANR,
Theorem \ref{th:MainTwo} implies the other assertions of Theorem
\ref{th:MainOne}.  The following sufficient condition for a space to
be an ANR is well known (e.g., \cite[Prop.~8.3]{McL18}).

\begin{lemma} \label{lemma:Sufficient}
  If $Z$ is a Hausdorff locally convex topological vector space, $K
  \subset Z$ is convex, $U \subset K$ is (relatively) open, $A \subset
  U$ is metrizable, and $r \colon U \to A$ is a retraction, then $A$
  is an ANR\footnote{For the sake of self containment we include the
    proof.  Suppose that $X$ is a metric space, $e \colon A \to X$
    maps $A$ homeomorphically onto $e(A)$, which is closed.  A
    generalization of the Tietze extension theorem due to Dugundji
    \cite{Du51} implies that $e^{-1} \colon e(A) \to A$ has a
    continuous extension $j \colon X \to K$. (Dugundji's proof is a
    variant of our proof that $D$ is an ANR, and after reading that
    the reader may have little difficulty constructing his argument.)
    Then $V = j^{-1}(U)$ is a neighborhood of $e(A)$, and $e \circ r
    \circ j|_V \colon V \to e(A)$ is a retraction.}.
\end{lemma}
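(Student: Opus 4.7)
The plan is to verify the definition of ANR directly, using Dugundji's extension theorem as the essential ingredient, exactly along the lines sketched in the author's footnote. Let $X$ be an arbitrary metric space and $e \colon A \to X$ an embedding onto a closed subset $e(A) \subset X$; the task is to produce an open neighborhood of $e(A)$ together with a retraction onto $e(A)$.

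First I would consider the continuous map $e^{-1} \colon e(A) \to A \subset K$. Its domain is a closed subset of the metric space $X$, and its codomain $K$ is a convex subset of the Hausdorff locally convex topological vector space $Z$. These are precisely the hypotheses of Dugundji's extension theorem, so $e^{-1}$ admits a continuous extension $j \colon X \to K$.

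Next I would set $V = j^{-1}(U)$. Since $U$ is relatively open in $K$ and $j$ is continuous, $V$ is open in $X$, and $V$ contains $e(A)$ because $j$ sends $e(A)$ into $A \subset U$. Define $\rho \colon V \to e(A)$ by $\rho(x) = e(r(j(x)))$, which is a composition of continuous maps and hence continuous. For any $a \in A$, $j(e(a)) = e^{-1}(e(a)) = a \in A$, and since $r$ is a retraction onto $A$, $r(a) = a$; therefore $\rho(e(a)) = e(a)$, so $\rho$ is a retraction of $V$ onto $e(A)$. Since $A$ is homeomorphic to $e(A)$ and the embedding $e$ was arbitrary, $A$ is an ANR.

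There is no real obstacle beyond invoking Dugundji's theorem correctly: the hypotheses are tailored to its statement (metrizability of $A$ is what makes $e(A)$ a metric subspace of the ambient metric space $X$, convexity of $K$ and local convexity of $Z$ are what supply the extension, and the retraction $r$ is what converts the extended map into a retraction of a neighborhood). The only care needed is confirming that $j(V) \subset U$ so that $r \circ j|_V$ is well-defined, which is built into the choice $V = j^{-1}(U)$.
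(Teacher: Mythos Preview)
Your proof is correct and follows essentially the same route as the paper's footnote: extend $e^{-1}$ to $j \colon X \to K$ via Dugundji, set $V = j^{-1}(U)$, and take $e \circ r \circ j|_V$ as the retraction. You have simply spelled out the verification that this composite fixes $e(A)$ pointwise, which the paper leaves implicit.
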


Since $D$ is metrizable, the Arens-Eells theorem implies that there is
an embedding $e \colon D \to N$ of $D$ in a normed linear space $N$
such that $\tD = e(D)$ is closed in $N$.  Let $\tu = e \circ u|_C$,
and let $\tK$ be the convex hull of $\tD$.  Since a metric space is
paracompact, the open cover of $\tK \setminus \tD$ whose elements are
the balls (in $\tK$) centered at the various $\tx \in \tK \setminus D$
of radius one third of the distance from $\tx$ to $\tD$, has a locally
finite refinement $\tcU$. For each $\tU \in \tcU$ choose an $x_\tU \in
C$ such that the distance from $\tU$ to $\tu(x_\tU)$ is less than
twice the distance from $\tU$ to $\tD$.  Let $\{\varphi_\tU\}_{\tU \in
  \tcU}$ be a partition of unity subordinate to $\tcU$, define $\rho
\colon \tK \setminus \tD \to \tD$ by setting
$$\rho(z) = \tu\big(\sum_\tU \varphi_\tU(z)x_\tU\big),$$ and let $r
\colon \tK \to \tD$ be the function that is the identity on $D$ and
$\rho$ on $\tK \setminus \tD$. Lemma \ref{lemma:Sufficient} implies
that $D$ and $\tD$ are ANR's if $r$ is a retraction, which is
evidently the case if it is continuous.  Evidently $r$ is continuous
at each point in $\tK \setminus \tD$.  Fixing a point $\ty \in \tD$
and a neighborhood $\tV \subset \tD$, our goal is to find a
neighborhood $\tV'' \subset \tK$ of $\ty$ such that $r(\tV'') \subset
\tV$.

Proposition \ref{prop:SolidPreimage} gives a neighborhood $\tV'
\subset \tV$ such that $\tu^{-1}(\tV)$ contains the convex hull of
$\tu^{-1}(\tV')$.  Let $\delta > 0$ be small enough that $\tV'$
contains the ball of radius $\delta$ (in $\tD$) centered at $\ty$, and
let $\tV''$ be the ball of radius $\delta/6$ (in $\tK$) centered at
$\ty$.  Fix a point $\tz \in \tV''$.  If $\tz \in \tD$, then $r(\tz) =
\tz \in \tV'' \cap \tD \subset \tV$, so assume that $\tz \in \tV''
\setminus \tD$.

Any $\tU \in \tcU$ that contains $\tz$ is contained in the ball
centered at some $\tx \in \tK \setminus \tD$ of radius $\alpha$, where
$\alpha$ is one third of the distance from $\tx$ to $\tD$.  Choose
$\tw \in \tU$ whose distance from $\tu(x_\tU)$ is less than twice the
distance from $\tU$ to $\tD$, and thus less than $8\alpha$.  The
distance from $\tz$ to $\tw$ is less than $2\alpha$, so the distance
from $\ty$ to $\tu(x_\tU)$ is less than the distance from $\ty$ to
$\tz$ plus $10\alpha$, and thus less than $6$ times the distance from
$\ty$ to $\tz$ because the latter quantity is greater than $2\alpha$.
Therefore $\tu(x_\tU) \in \tV'$ for all $\tU \in \tcU$ that contain
$\tz$, so $\sum_\tU \varphi_\tU(\tz)x_\tU$ is in the convex hull of
$\tu^{-1}(\tV')$ and thus $\rho(\tz) \in \tV$, as desired.

\section{Relation with the Vietoris-Begle Theorem}

We state two versions of the Vietoris-Begle theorem, which use
Alexander-Spanier cohomology and homology respectively.  The first
might be regarded as the ``standard'' version.  It asserts that if $X$
and $Y$ are paracompact Hausdorff spaces, $f \colon X \to Y$ is a
closed continuous surjection, and, for some $n \ge 0$,
$\tH^k(f^{-1}(y)) = 0$ for all $y \in Y$ and $k < n$, then $\tH^k(f)
\colon \tH^k(X) \to \tH^k(Y)$ is an isomorphism for $k < n$ and an
injection for $k = n$. A particularly elegant proof is given in
\cite{Law73}.  The second version (which we use in the proof of
Theorem \ref{th:MainThree}) is a dual result that was established by
Volovikov and Ahn \cite{VoAn84} and reproved by Dydak \cite{Dyd86}.
It asserts that if $X$ and $Y$ are compact metrizable spaces, $f$ is a
continuous surjection, and, for some $n \ge 0$, $\tH_k(f^{-1}(y)) = 0$
for all $y \in Y$ and $k < n$, then $\tH_k(f) \colon \tH_k(X) \to
\tH_k(Y)$ is an isomorphism for $k < n$ and a surjection for $k = n$.
In each case $\tH^*(f)$ or $\tH_*(f)$ is an isomorphism if the fibers
are acyclic.

Insofar as each passes from an assumption that the fibers of $f$ are,
in some sense, trivial, to a conclusion that $f$ is or induces an
isomorphism, Theorem \ref{th:MainTwo} and the Vietoris-Begle theorem
(including other versions in the literature) are quite similar.  In
this connection we should also mention the Main Theorem of
\cite{Sma57a}, which asserts that if $X$ and $Y$ are path connected,
locally compact, separable metric spaces, $X$ is $LC^n$, and for each
$y \in Y$, $f^{-1}(y)$ is $LC^{n-1}$ and $(n-1)$-connected\footnote{A
  path connected space $X$ is: (a) \emph{$n$-connected} if, for every
  $1 \le k \le n$, every map $S^k \to X$ has a continuous extension
  $D^{k+1} \to X$; (b) $LC^n$ if, for every $x \in X$ and neighborhood
  $U \subset X$, there is a neighborhood $V \subset U$ of $x$ such
  that for each $1 \le k \le n$, every map $S^k \to V$ has a
  continuous extension $D^{k+1} \to U$.}, then $Y$ is $LC^n$ and $f_*
\colon \pi_k(X) \to \pi_k(Y)$ is an isomorphism for all $k < n$ and a
surjection when $k = n$.

Theorem \ref{th:MainThree} further develops the relation between these
results.  In comparison with Theorem \ref{th:MainTwo}, it imposes an
hypothesis on the fibers themselves, rather than the preimages of
neighborhoods of points of $Y$, but the domain must be compact and
simply connected.

\begin{theorem} \label{th:MainThree}
  If $X$ and $Y$ are compact connected ANR's, $X$ is simply connected,
  $f \colon X \to Y$ is a continuous surjection, and, for each $y \in
  Y$, $f^{-1}(y)$ is contractible, then $f$ is a homotopy equivalence.
\end{theorem}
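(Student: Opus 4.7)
The plan is to reduce to Theorem~\ref{th:MainTwo}, whose hypotheses are almost immediate: $X$ is a compact (hence separable) connected ANR and therefore belongs to $\cW_0$, $Y$ is an ANR by assumption, and $f$ is surjective. The one non-trivial requirement is that $f$ be compressive, i.e., that for each $y\in Y$ and each open neighborhood $V\subset Y$ of $y$ there exist an open neighborhood $V'\subset V$ of $y$ with $f^{-1}(V')$ contractible inside $f^{-1}(V)$. Simple connectedness of $X$ will not enter this route.

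Fix such $y$ and $V$, let $H_0 \colon f^{-1}(y)\times[0,1]\to f^{-1}(y)$ be a contraction of the fiber to a point $x_0$, and consider the closed subset
$$A = (X \times \{0\}) \cup (f^{-1}(y) \times [0,1]) \cup (X \times \{1\}) \subset X\times[0,1].$$
Define $\phi\colon A\to X$ to be the identity on $X\times\{0\}$, the constant $x_0$ on $X\times\{1\}$, and $H_0$ on $f^{-1}(y)\times[0,1]$; these prescriptions agree on the overlaps $f^{-1}(y)\times\{0,1\}$, so $\phi$ is continuous on $A$. Because $X$ is an ANR and $X\times[0,1]$ is metric, $\phi$ extends to a continuous $\tilde\phi\colon N\to X$ on some open neighborhood $N$ of $A$ in $X\times[0,1]$. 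The set $\tilde\phi^{-1}(f^{-1}(V))$ is open in $N$ and contains the compact set $f^{-1}(y)\times[0,1]$, so the tube lemma provides an open $U\subset X$ with $f^{-1}(y)\subset U$ and $\tilde\phi(U\times[0,1])\subset f^{-1}(V)$.

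Since $X$ is compact Hausdorff, $f$ is closed; therefore $V' := V\setminus f(X\setminus U)$ is open in $Y$, is contained in $V$, contains $y$, and satisfies $f^{-1}(V')\subset U$. Restricting $\tilde\phi$ to $f^{-1}(V')\times[0,1]$ yields a contraction of $f^{-1}(V')$ to $x_0$ taking place inside $f^{-1}(V)$, so $(V',V)$ is a compressive pair. Hence $f$ is compressive, and Theorem~\ref{th:MainTwo} concludes that $f$ is a homotopy equivalence.

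The main obstacle is that simply extending $H_0$ through the ANR extension property need not produce a homotopy whose time-$1$ map is constant on a neighborhood of $f^{-1}(y)$. The device above circumvents this by prescribing the extension simultaneously on $X\times\{0\}$, $f^{-1}(y)\times[0,1]$, and $X\times\{1\}$, so that any continuous extension is automatically a null-homotopy of a tube around the fiber. An alternative path fitting the section's title would invoke the dual Vietoris-Begle theorem to deduce that $f_*\colon H_*(X)\to H_*(Y)$ is an isomorphism (the fibers being acyclic) and then apply the homology form of Whitehead's theorem, which is where the simple-connectedness hypothesis on $X$, together with a separate argument giving simple connectedness of $Y$, would become essential.
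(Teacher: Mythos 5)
Your proof is correct, and it is a genuinely different argument from the one in the paper. You verify that $f$ is compressive directly from the contractibility of the fibers: the device of extending the map that is the identity on $X\times\{0\}$, the fiber contraction on $f^{-1}(y)\times[0,1]$, and constant on $X\times\{1\}$ over a neighborhood of this closed subset of $X\times[0,1]$ (using the neighborhood extension property of the ANR $X$), followed by the tube lemma and the closedness of $f$ to shrink to a saturated neighborhood, is exactly the classical argument that a contractible compactum in an ANR has property $UV^\infty$. The paper instead proves Theorem \ref{th:MainThree} by establishing surjectivity of $f_*$ on $\pi_1$ via Lemma \ref{lemma:Paths}, using simple connectedness of $X$ to identify the universal covers with the spaces themselves, applying the dual Vietoris--Begle theorem of Volovikov--Ahn and Dydak to obtain homology isomorphisms, and concluding with the homology form of Whitehead's theorem (Proposition \ref{prop:WhiteheadTwo}); the only place it derives compressiveness from a condition on fibers is Lemma \ref{lemma:Pairs}, which assumes the fiber is a compact AR so that an actual retraction onto it is available. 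Your route buys several things: it never uses the simple connectedness of $X$, so it proves the stronger statement that any surjection of compact connected ANRs with contractible fibers is a homotopy equivalence; it stays entirely within the framework of Theorem \ref{th:MainTwo} instead of importing the Vietoris--Begle and Whitehead machinery; and it shows that the concluding Kinoshita example is in fact compressive (hence handled by Theorem \ref{th:MainTwo} directly) even though its exceptional fiber is not an AR, which somewhat undercuts the intended moral of that example. What the paper's route buys is the explicit bridge to the Vietoris--Begle theorem that the section is designed to exhibit.
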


We need a technical result that is a variant of \cite[Lemma 1]{Sma57}
and \cite[Lemma 1]{Sma57a}.

\begin{lemma} \label{lemma:Nearby}
  If $A$ is a compact space, $B$ is a Hausdorff space, $g \colon A \to
  B$ is a map, $b \in B$, and $U \subset A$ is an open neighborhood of
  $g^{-1}(b)$, then there is an open $V \subset B$ containing $b$ such
  that $g^{-1}(V) \subset U$.
\end{lemma}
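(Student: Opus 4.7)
The plan is to exploit the compactness of $A$ together with the Hausdorff property of $B$ in the standard way. First I would consider the complement $A \setminus U$, which is closed in the compact space $A$ and therefore compact. Applying the continuous map $g$, the image $g(A \setminus U)$ is a compact subset of $B$. Since $B$ is Hausdorff, every compact subset is closed, so $g(A \setminus U)$ is closed in $B$.

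Next I would set $V = B \setminus g(A \setminus U)$, which is open. To see that $b \in V$, I would note that the hypothesis $g^{-1}(b) \subset U$ means no point of $A \setminus U$ maps to $b$, so $b \notin g(A \setminus U)$. Finally I would verify the inclusion $g^{-1}(V) \subset U$: if $x \in g^{-1}(V)$ then $g(x) \notin g(A \setminus U)$, which forces $x \notin A \setminus U$, i.e., $x \in U$.

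There is no serious obstacle here; the result is a routine application of the fact that continuous maps from compact spaces to Hausdorff spaces are closed. The only point worth a moment's care is the last inclusion, where one must use the contrapositive formulation rather than a set-theoretic identity, since in general $g^{-1}(B \setminus g(S))$ is only contained in, and not equal to, $A \setminus S$.
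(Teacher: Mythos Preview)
Your proof is correct and, in fact, cleaner than the paper's. The paper argues by contradiction: assuming no such $V$ exists, it picks for each open $V \ni b$ a point $a_V \in A \setminus U$ with $g(a_V) \in V$, extracts a convergent subnet in the compact set $A \setminus U$ with limit $a$, and uses the Hausdorff property of $B$ to conclude $g(a) = b$, contradicting $g^{-1}(b) \subset U$. Your approach is direct and constructive: you simply take $V = B \setminus g(A \setminus U)$, using that $g(A \setminus U)$ is compact, hence closed in the Hausdorff space $B$. This avoids nets entirely and makes the role of the hypotheses more transparent; the paper's argument, on the other hand, generalizes slightly more readily to settings where one has only sequential or net-compactness at hand.
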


\begin{proof}
  Otherwise for each open $V \subset B$ containing $b$ there would be
  an $a_V \in A \setminus U$ such that $g(a_V) \in V$.  Since $A
  \setminus U$ is compact, a subnet of $\{a_V\}$ would converge to one
  of its elements, say $a$.  Since $B$ is Hausdorff, continuity gives
  $g(a) = b$, but then $a \in g^{-1}(b) \cap (A \setminus U) =
  \emptyset$.
\end{proof}

As before we assume that $X$ and $Y$ are isometrically embedded in
normed linear spaces $N_X$ and $N_Y$, we fix retractions retractions
$r_X \colon U_X \to X$ and $s_Y \colon V_Y \to Y$ of neighborhoods
$U_X \subset N_X$ and $V_Y \subset N_Y$ of $X$ and $Y$, and we let $W
\subset Y \times Y$ be a neighborhood of the diagonal $\{\, (y,y) : y
\in Y \,\}$ such that $(1 - t)y_0 + ty_1 \in V_Y$ for all $(y_0,y_1)
\in W$ and $t \in [0,1]$.

\begin{lemma} \label{lemma:Paths}
  If $y \in Y$ and $f^{-1}(y)$ is path connected, then, for any open
  $V \subset Y$ containing $y$, there is an open $V' \subset V$
  containing $y$ such that any function $\eta_\partial \colon \{-1,1\} \to
  f^{-1}(V')$ has a continuous extension $\eta \colon [-1,1] \to
  f^{-1}(V)$.
\end{lemma}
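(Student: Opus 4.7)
The plan is to reduce the statement to the existence of a path connected open neighborhood $U$ of $f^{-1}(y)$ inside $f^{-1}(V)$. Once such a $U$ is produced, Lemma \ref{lemma:Nearby} applied to $f \colon X \to Y$ at the point $y$ (using that $X$ is compact and $Y$ is Hausdorff) yields an open $V' \subset Y$ with $y \in V'$ and $f^{-1}(V') \subset U$; after replacing $V'$ by $V' \cap V$ if necessary we have $V' \subset V$. Any map $\eta_\partial \colon \{-1,1\} \to f^{-1}(V')$ then has its two values in the path connected set $U$, so a continuous path in $U \subset f^{-1}(V)$ joining them, reparametrized by $[-1,1]$, gives the desired $\eta$.

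The substantive step is therefore constructing $U$. I would invoke the fact that a metric ANR is locally path connected (a standard consequence of the ANR definition, since ANRs are locally contractible); hence the open subset $f^{-1}(V)$ of $X$ is also locally path connected. In any locally connected space the connected components of an open set are themselves open, so I would let $U$ be the connected component of $f^{-1}(V)$ that contains the connected set $f^{-1}(y)$ (note that $f^{-1}(y)$, being path connected by hypothesis, is connected and so lies entirely in a single component). Then $U$ is an open neighborhood of $f^{-1}(y)$ with $U \subset f^{-1}(V)$, and local path connectedness upgrades ``connected'' to ``path connected'' for $U$ by the usual clopen argument: the set of points of $U$ joinable by a path to a fixed basepoint is both open and closed in $U$.

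The main obstacle, insofar as there is one, lies in this single appeal to local path connectedness of ANRs; the rest of the argument is a routine combination of Lemma \ref{lemma:Nearby} and the connected-component construction. Nothing in this proof uses the full contractibility hypothesis of Theorem \ref{th:MainThree} on the fibers, only path connectedness of $f^{-1}(y)$, which is consistent with the statement of the lemma.
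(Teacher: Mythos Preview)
Your argument is correct and complete under the standing hypotheses of Theorem~\ref{th:MainThree} (so that $X$ is a compact ANR, making Lemma~\ref{lemma:Nearby} applicable and $X$ locally path connected).

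The paper takes a different, more explicit route. Rather than invoking local path connectedness abstractly, it works directly with the embedding $X \subset N_X$ and the retraction $r_X \colon U_X \to X$ already fixed in the section: it finds $\delta > 0$ so that the $\delta$-ball $B_\delta$ around the compact set $f^{-1}(y)$ in $N_X$ lies in $r_X^{-1}(f^{-1}(V))$, uses Lemma~\ref{lemma:Nearby} to get $V'$ with $f^{-1}(V') \subset B_\delta$, and then builds $\eta$ as a concatenation of three pieces --- a retracted straight segment from $\eta_\partial(-1)$ to a nearby $x_{-1} \in f^{-1}(y)$, a path in $f^{-1}(y)$ from $x_{-1}$ to $x_1$, and a retracted straight segment from $x_1$ to $\eta_\partial(1)$. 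Your approach is cleaner and avoids the three-piece construction, at the cost of citing the (standard) fact that ANRs are locally path connected; the paper's approach is self-contained relative to the retraction apparatus it has already set up, and makes the use of path connectedness of $f^{-1}(y)$ visible as the middle leg of the path.
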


\begin{proof}
  Since $f^{-1}(y)$ is compact there is a $\delta > 0$ such that the
  open $\delta$-ball $B_\delta$ in $N_X$ around $f^{-1}(y)$ is
  contained in $r_X^{-1}(f^{-1}(V))$.  Lemma \ref{lemma:Nearby} gives
  an open $V' \subset V$ containing $y$ such that $f^{-1}(V') \subset
  B_\delta$.  Suppose $\eta_\partial$ is a function from $\{-1,1\}$ to
  $f^{-1}(V')$.  Choose $x_{-1}, x_1 \in f^{-1}(y)$ such that the
  distance from $\eta_\partial(-1)$ to $x_{-1}$ and the distance from
  $\eta_\partial(1)$ to $x_1$ are both less than $\delta$.  A
  satisfactory extension $\eta$ can be constructed by combining
  reparameterizations of the three paths $t \mapsto r_X((1 -
  t)\eta_\partial(-1) + tx_{-1})$, a path $\pi \colon [0,1] \to
  f^{-1}(y)$ with $\pi(-1) = x_{-1}$ and $\pi(1) = x_1$, and the path
  $t \mapsto r_X((1 - t)x_1 + t\eta_\partial(1))$.
\end{proof}

Fix a point $x_0 \in X$, and let $y_0 = f(x_0)$.  Let $\tX$ and $\tY$
be the universal covering spaces of $X$ and $Y$, with respect to the
base points $x_0$ and $y_0$, and let $\tf \colon \tX \to \tY$ be the
lift of $f$ with respect to these base points.  The proof of Theorem
\ref{th:MainThree} verifies the hypotheses of the following variant of
Proposition \ref{prop:WhiteheadOne}, which is Theorem 3 of
\cite{Whi49}.

\begin{proposition} \label{prop:WhiteheadTwo}
  If $X$ and $Y$ are connected elements of $\cW_0$, a map $f \colon X
  \to Y$ is a homotopy equivalence if and only if $f_* \colon \pi_1(X)
  \to \pi_1(Y)$ is an isomorphism and $\tH_n(\tf) \colon \tH_n(\tX)
  \to \tH_n(\tY)$ is an isomorphism for each $n = 2, 3, \ldots$.
\end{proposition}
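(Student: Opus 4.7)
The plan is to reduce Proposition \ref{prop:WhiteheadTwo} to Proposition \ref{prop:WhiteheadOne} by bootstrapping the homological hypothesis on the lift $\tf$ to a full homotopical hypothesis on $f$, via the Hurewicz theorem applied to the simply connected cover.

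For the necessity direction, if $f$ is a homotopy equivalence with inverse $g$, then $f_\ast\colon\pi_1(X)\to\pi_1(Y)$ is an isomorphism, so $g$ admits a lift $\tg\colon\tY\to\tX$ once compatible base-points are chosen. The homotopies $gf\simeq 1_X$ and $fg\simeq 1_Y$ lift, via the homotopy lifting property of the covering projections, to homotopies $\tg\tf\simeq 1_{\tX}$ and $\tf\tg\simeq 1_{\tY}$, so $\tf$ is a homotopy equivalence and in particular induces isomorphisms on every $\tH_n$.

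For the sufficiency direction, assume the stated isomorphisms. I would first argue that $\tX,\tY\in\cW_0$: since $X\simeq K$ for some CW complex $K$, the lift construction is natural enough that $\tX$ is homotopy equivalent to the universal cover of $K$, which is again a CW complex, and likewise for $\tY$; here Milnor's equivalent characterizations of $\cW_0$ are convenient. Since $\tX,\tY$ are simply connected, $\tH_0(\tf)$ and $\tH_1(\tf)$ are trivially isomorphisms, so the hypothesis gives $\tH_n(\tf)$ iso for every $n\ge 0$. Applying the homological Whitehead theorem for simply connected $\cW_0$ spaces (a standard consequence of relative Hurewicz applied to the mapping cylinder of $\tf$; see e.g.\ Hatcher Cor.~4.33) shows that $\tf$ is itself a homotopy equivalence, hence induces isomorphisms $\tf_\ast\colon\pi_n(\tX)\to\pi_n(\tY)$ for all $n\ge 2$. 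Because the covering projections induce isomorphisms on $\pi_n$ for $n\ge 2$ and the square $p_Y\tf=fp_X$ commutes, $f_\ast$ is an isomorphism on $\pi_n$ for every $n\ge 2$. Combined with the $\pi_1$ hypothesis, Proposition \ref{prop:WhiteheadOne}, applied after replacing $f$ by the inclusion $X\hookrightarrow M_f$ into the mapping cylinder and composing with the deformation retraction $M_f\to Y$ to meet the surjectivity requirement, concludes that $f$ is a homotopy equivalence.

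The main obstacle is the invocation of the homological Whitehead theorem in the sufficiency direction: this is the substantive step that converts homology data into homotopy data on the simply connected covers, and I would present it as a citation (to Hatcher or directly to Whitehead \cite{Whi49}) rather than reproving it, since the relative Hurewicz argument is standard but not short. A secondary technical point is verifying $\tX,\tY\in\cW_0$ when $X$ and $Y$ are only assumed homotopy equivalent to CW complexes rather than being CW complexes themselves; this is handled by passing the homotopy equivalence through the universal-cover functor on a CW representative.
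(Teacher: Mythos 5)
The paper does not prove this proposition at all: it is quoted verbatim as Theorem~3 of Whitehead's paper \cite{Whi49} (with slightly coarsened hypotheses), just as Proposition~\ref{prop:WhiteheadOne} is quoted as his Theorem~1. Your proposal therefore supplies an argument where the paper supplies only a citation, and the argument you give is essentially the standard modern derivation: lift to universal covers, observe that $\tX$ and $\tY$ are simply connected elements of $\cW_0$ (the universal cover of a countable CW complex is again a countable CW complex, since $\pi_1$ of a countable complex is countable, and a $\pi_1$-isomorphic homotopy equivalence lifts to one of universal covers), apply the homological Whitehead theorem (relative Hurewicz on the mapping cylinder of $\tf$) to get that $\tf$ is a homotopy equivalence, transport $\pi_n$-isomorphisms for $n \ge 2$ down the covering projections, and finish with Proposition~\ref{prop:WhiteheadOne}. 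This is correct in substance and is, in effect, a reconstruction of Whitehead's own proof; what it buys is self-containment at the cost of importing the homological Whitehead theorem, which is of comparable depth to the result being proved. Two small points to tidy: in the necessity direction the lifted homotopy ends at a deck transformation rather than at $1_{\tX}$, so you should compose $\tg$ with its inverse; and your surjectivity fix is circular as written, since the inclusion $X \hookrightarrow M_f$ followed by the deformation retraction $M_f \to Y$ is exactly $f$ again --- either note that Whitehead's Theorem~1 does not actually require surjectivity (the paper's surjectivity hypothesis is an artifact of its intended application), or homotope $f$ to a surjection inside $Y$ using that $Y$ is a connected ANR.
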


\begin{proof}[Proof of Theorem \ref{th:MainThree}]
Lemma \ref{lemma:Paths} implies that there is an open cover $\{\, V' :
(V',V) \in \cV \}$ of $Y$ where $\cV$ is a collection of pairs
$(V',V)$ such that $V$ and $V'$ are open subsets of $Y$, $V \times V
\subset W$, $V' \subset V$, and any function $\eta_\partial \colon
\{-1,1\} \to f^{-1}(V')$ has a continuous extension $\eta \colon
    [-1,1] \to f^{-1}(V)$.  Consider a continuous $g \colon S^1 \to
    Y$.  After sufficient subdivision $S^1$ is a simplicial complex,
    each of whose $1$-simplices $\sigma$ satisfies $g(\partial\sigma)
    \subset V'$ for some $(V',V) \in \cV$. For each vertex $v$ of this
    complex choose $\gamma_0(v) \in f^{-1}(g(v))$.  As in the proof of
    Proposition \ref{prop:Lift}, there is an extension $\gamma_1
    \colon S^1 \to X$ of $\gamma_0$ such that $(g(p),f(\gamma(p))) \in
    W$ for all $p \in S^1$, so that $h(p,t) = s_Y((1 - t)g(t) +
    tf(\gamma(p)))$ is a homotopy between $g$ and $f\gamma$.  Thus
    $f_* \colon \pi_1(X) \to \pi_1(Y)$ is surjective.

Since $X$ is simply connected, it follows that $Y$ is simply
connected, so $f_* \colon \pi_1(X) \to \pi_1(Y)$ is an isomorphism and
$\tX$, $\tY$, and $\tf$ are (up to irrelevant formalities) just $X$,
$Y$, and $f$.  As we mentioned previously, since each fibre
$f^{-1}(y)$ is contractible, it is acyclic, and $\tX = X$ is compact,
so the dual Vietoris-Begle theorem of Volovikov-Ahn and Dydak implies
that $\tH_n(\tf)$ is an isomorphism for all $n \ge 2$, after which
Proposition \ref{prop:WhiteheadTwo} implies that $f$ is a homotopy
equivalence.  (The dual Vietoris-Begle theorem is specific to
Alexander-Spanier homology, and Whitehead uses singular homology.
However, it is well known that Alexander-Spanier homology agrees with
{\v C}ech homology on compact Hausdorff spaces, and {\v C}ech and
singular homology agree on ANR's \cite{Dug55,Kod55,Mar58}.)
\end{proof}

The assumptions of Theorem \ref{th:MainThree} imply those of Theorem
\ref{th:MainTwo} if each fiber is an AR.

\begin{lemma} \label{lemma:Pairs}
  If $X$ is a locally compact metric space, $f \colon X \to Y$ is a
  surjective map, $V \subset Y$ is open, $y \in V$, and $f^{-1}(y)$ is
  a compact AR, then there is an open $V' \subset V$ containing $y$
  such that $(V',V)$ is a compressive pair.
\end{lemma}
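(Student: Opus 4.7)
The plan is to contract $f^{-1}(V')$ inside $f^{-1}(V)$ in two stages: first deform it onto $f^{-1}(y)$ via a neighborhood deformation retraction, then contract the compact AR $f^{-1}(y)$ to a point. The ingredients are the contractibility of $f^{-1}(y)$, a neighborhood deformation retract onto it inside $f^{-1}(V)$, and Lemma \ref{lemma:Nearby} to ensure the preimage $f^{-1}(V')$ lies where the deformation is defined.

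First I would use local compactness of $X$ together with compactness of $f^{-1}(y)$ and openness of $f^{-1}(V)$ to cover $f^{-1}(y)$ by finitely many open sets with compact closures contained in $f^{-1}(V)$, and take their union $K$---a compact neighborhood of $f^{-1}(y)$ with $f^{-1}(y) \subset \mathrm{int}(K) \subset K \subset f^{-1}(V)$. Since $f^{-1}(y)$ is a compact AR, in particular a compact ANR, embedded as a closed subset of the metric space $X$, the classical theorem that a compact ANR in a metric space is a neighborhood deformation retract (Borsuk) yields an open neighborhood $U$ of $f^{-1}(y)$ with $U \subset \mathrm{int}(K)$ and a homotopy $h \colon U \times [0,1] \to U$ with $h(\cdot,0) = \mathrm{id}_U$, $h(U \times \{1\}) \subset f^{-1}(y)$, and $h(x,t) = x$ for all $x \in f^{-1}(y)$ and $t$. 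Now apply Lemma \ref{lemma:Nearby} to the restriction $f|_K \colon K \to Y$ with $U$ as the open neighborhood of $(f|_K)^{-1}(y) = f^{-1}(y)$ in $K$; this produces an open $V'' \subset V$ containing $y$ with $K \cap f^{-1}(V'') \subset U$. In the intended application $X$ is compact, so $f$ is a closed map and $f(X \setminus \mathrm{int}(K))$ is a closed subset of $Y$ that misses $y$; shrinking $V''$ to $V' := V'' \setminus f(X \setminus \mathrm{int}(K))$ then gives $f^{-1}(V') \subset \mathrm{int}(K) \cap f^{-1}(V'') \subset U$.

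To finish, fix a contraction $c \colon f^{-1}(y) \times [0,1] \to f^{-1}(y)$ with $c(\cdot,0) = \mathrm{id}$ and $c(\cdot,1) \equiv x_0$, which exists because $f^{-1}(y)$ is an AR hence contractible, and define $\xi \colon f^{-1}(V') \times [0,1] \to f^{-1}(V)$ by
\[
\xi(x,t) = \begin{cases} h(x,2t), & 0 \le t \le 1/2, \\ c(h(x,1),\, 2t-1), & 1/2 \le t \le 1. \end{cases}
\]
The two pieces agree at $t = 1/2$ because $h(x,1) \in f^{-1}(y)$ and $c(\cdot,0) = \mathrm{id}$; by construction $\xi(\cdot,0) = \mathrm{id}$, $\xi(\cdot,1) \equiv x_0$, and the image of $\xi$ lies in $U \cup f^{-1}(y) \subset f^{-1}(V)$, so $(V',V)$ is a compressive pair.

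The main obstacle is really invoking that a compact ANR embedded as a closed subset of a metric space is a neighborhood deformation retract, which is slightly more than the bare neighborhood retract property built into the definition of ANR. One could try to sidestep this by embedding $X$ isometrically into a normed linear space and running straight-line homotopies through a retraction of a neighborhood of $f^{-1}(y)$, but that approach requires $X$ itself to admit a retraction from a neighborhood in the ambient space---a hypothesis not given here---which is why the argument must lean on the ANR structure of the fiber.
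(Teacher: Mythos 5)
Your two-stage plan (deform $f^{-1}(V')$ onto the fiber, then run a contraction of the fiber) is exactly the structure of the paper's proof, and your use of Lemma \ref{lemma:Nearby} to force $f^{-1}(V')$ into the region where the deformation is defined is likewise the same. The gap is the step you yourself flag as the main obstacle: it is \emph{not} a classical theorem that a compact ANR embedded as a closed subset of an arbitrary metric space is a neighborhood \emph{deformation} retract. That statement requires the ambient space to be an ANR (or at least locally contractible near the subset). A counterexample with compact metric ambient space: let $X$ be the Hawaiian earring and $A = \{p\}$ its wild point, a compact AR. Every neighborhood $U$ of $p$ contains some full circle $C_n$, and $C_n$ is a retract of $X$ (collapse the other circles to $p$), so a homotopy in $X$ from the inclusion of $U$ to a map into $\{p\}$ would make $C_n$ null-homotopic in itself. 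Hence no neighborhood of $p$ contracts in $X$, and $\{p\}$ is not a neighborhood deformation retract. The same example, with $f(x) = d(x,p)$ and $Y = f(X)$, even shows that the lemma's conclusion genuinely fails if $X$ is only a locally compact metric space; some local regularity of $X$ near the fiber is indispensable, so no argument from the literal hypotheses can close this gap.

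The paper resolves this by using the standing conventions of its section: in the intended application $X$ is a compact ANR isometrically embedded in a normed space $N_X$ with a retraction $r_X \colon U_X \to X$. The ANR property of the fiber is used only to supply a neighborhood \emph{retraction} $r \colon U \to f^{-1}(y)$ --- which does hold in any metric space --- and the deformation is then manufactured by the straight-line homotopy $t \mapsto r_X((1-t)x + t r(x))$ in $N_X$, with $U$ shrunk to the set $U'$ of points for which this path stays in $r_X^{-1}(f^{-1}(V))$. So the route you rejected in your last paragraph (because it ``requires $X$ itself to admit a retraction from a neighborhood in the ambient space'') is in fact the paper's route, and the hypothesis you correctly sense is missing from the lemma's statement is supplied by the section's standing assumptions on $X$. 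Your argument becomes correct if you add the hypothesis that $X$ is an ANR, since a closed ANR subspace of an ANR is indeed a neighborhood deformation retract; as written, however, the key step is false.
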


\begin{proof}
  We can replace $V$ with a smaller neighborhood of $y$, so we may
  assume that $V \times V \subset W$.  Fix a contraction $c \colon
  f^{-1}(y) \times [0,1] \to f^{-1}(y)$ and a retraction $r \colon U
  \to f^{-1}(y)$ where $U \subset X$ is a neighborhood of $f^{-1}(y)$.
  Since $f^{-1}(y)$ is compact,
  $$U' = \{\, x \in U : \text{$(1 - t)x + tr(x) \in
    r_X^{-1}(f^{-1}(V))$ for all $t \in [0,1]$} \,\}$$ is an open
  neighborhood of $f^{-1}(y)$.  Lemma \ref{lemma:Nearby}, applied to a
  compact neighborhood of $f^{-1}(y)$ contained in $U'$, implies that
  there is an open $V' \subset V$ containing $y$ such that $f^{-1}(V')
  \subset U'$.  Let $\xi \colon f^{-1}(V') \times [0,1] \to f^{-1}(V)$ be
  the function
  \begin{equation*}\xi(x,t) = 
  \begin{cases}
    r_X((1 - 2t)x + 2tr(x)), & 0 \le t \le \tfrac12, \\
    c(r(x),2t - 1), & \tfrac12 \le t \le 1. 
  \end{cases} \qedhere
  \end{equation*}
\end{proof}

We conclude with an example of a map $f$ satisfying the hypotheses of
Theorem \ref{th:MainThree}, but with one fiber that is not an AR, so
that Lemma \ref{lemma:Pairs} cannot be used to verify the hypotheses
of Theorem \ref{th:MainTwo}.  Kinoshita \cite{Ki53} (see also
\cite[8.1]{McL18}) created an example that came to be known as the
\emph{tin can with a roll of toilet paper}.  This is the space $T = (D
\times \{0\}) \cup (C \times [0,1]) \cup (S \times [0,1]) \subset
\Re^3$ where: $$D = \{\, x \in \Re^2 : \|x\| \le 1\,\}, \quad C = \{\,
x \in \Re^2 : \|x\| = 1\,\}, \quad S = \{\, \tfrac{\theta}{1 +
  \theta}(\cos \theta,\sin \theta) : 0 \le \theta < \infty \,\}.$$
There is an obvious contraction of $T$ that deformation retracts
vertically onto $D \times \{0\}$, then compresses that set.  Kinoshita
gave a continuous function from $T$ to itself that does not have a
fixed point, so, in view of the Eilenberg-Montgomery fixed point
theorem, $T$ cannot be an AR.

Let $S^2 = \{\, z \in \Re^3 : \|z\| = 1 \,\}$.  It is easy to
see\footnote{Here is an explicit construction.  Let $L$ be a line in
  $\Re^2$ that passes through the origin, and let $C_L = (L \times
  \Re) \cap S^2$.  For $\varep > 0$ let $D_{L.\varep} = P_{L,\varep}
  \cup Q_{L,\varep} \cup R_{L,\varep}$ where $P_{L,\varep} = \{\, x
  \in L : \|x\| \le 1 + \varep \,\} \times \{-\varep\}$, $Q_{L,\varep}
  = \{\, x \in L : \|x\| = 1 + \varep \,\} \times [-\varep, 1 +
    \varep]$, and $R_{L,\varep}$ is the union of $\{\, x \in L : 1 \le
  \|x\| \le 1 + \varep \,\} \times \{1 + \varep\}$, the singleton
  $\{(0,0,1+\varep)\}$, and for each pair of consecutive points
  $x_0,x_1 \in L \cap S$ the set $\{\, ((1 - t)x_0 + tx_1,\max\{1 +
  \varep - \min\{t,1-t\}\|x_1 - x_0\|/\varep,\varep\}) : t \in [0,1]
  \,\}$.  It is easy to construct maps $h_{L,\varep} \colon C_L \times
  \{\varep\} \to D_{L,\varep}$ that combine to give a satisfactory
  $h$.} that there is a homeomorphism $h \colon S^2 \times (0,\infty)
\to \Re^3 \setminus T$ such that $T = \bigcap_{\varep > 0}
\overline{h(S^2 \times (0,\varep))}$.  For $x \in \Re^3 \setminus T$
let $h^{-1}(x) = (p(x),\alpha(x))$.  Let $X \subset \Re^3$ be a
compact ball centered at the origin that contains $T$, let $f \colon X
\to \Re^3$ be the function that maps each $x \in T$ to the origin and
maps each $x \notin T$ to $\alpha(x)p(x)$, and let $Y = f(X)$.
Clearly $f \colon X \to Y$ satisfies the hypotheses of Theorem
\ref{th:MainThree} but not those of Lemma \ref{lemma:Pairs}.

\bibliographystyle{aomplain}
\bibliography{all_cites}

\end{document}